\documentclass{article}

\usepackage[margin=1in]{geometry}

\usepackage{graphicx}
\usepackage{lipsum,cite}
\usepackage{amsfonts,amssymb,amsmath,amsthm}
\usepackage{bm}
\usepackage{mathtools}
\usepackage{nicematrix}
\usepackage{epstopdf}
\usepackage{algorithmic}
\usepackage[algo2e,ruled,vlined,linesnumbered]{algorithm2e}
\usepackage{xcolor}
\usepackage{marginnote}
\usepackage{authblk}
\usepackage{hyperref}
\usepackage[nameinlink]{cleveref}
\usepackage{doi}
\usepackage{xr}

\DeclareGraphicsExtensions{.eps,.pdf,.png,.jpg}

\title{Birkhoff interpolation models for optimization\\with some available derivatives\thanks{This work was supported by the U.S.~Department of Energy, Office of Science, Office of Advanced Scientific Computing Research and Office of Fusion Energy Science, Scientific Discovery through Advanced Computing (SciDAC) Program through the FASTMath Institute and the StellFoundry Partnership Project under Contract No.~DE-AC02-06CH11357.}}

\author[1]{Jeffrey Larson}
\author[1]{Matt Menickelly}
\author[1]{Evan Toler}
\affil[1]{Argonne National Laboratory, Lemont, IL 60439\\
\texttt{\{jmlarson,mmenickelly,etoler\}@anl.gov}}

\date{}

\newtheorem{theorem}{Theorem}
\newtheorem{lemma}[theorem]{Lemma}

\newtheorem{proposition}[theorem]{Proposition}
\newtheorem{corollary}[theorem]{Corollary}

\theoremstyle{definition}
\newtheorem{assumption}{Assumption}
\newtheorem{definition}[theorem]{Definition}

\theoremstyle{remark}

\crefname{hypothesis}{Hypothesis}{Hypotheses}
\crefname{assumption}{assumption}{assumptions}
\Crefname{assumption}{Assumption}{Assumptions}
\crefname{algocf}{algorithm}{algorithms}
\Crefname{algocf}{Algorithm}{Algorithms}
\crefname{algline}{line}{lines}
\Crefname{algline}{Line}{Lines}
\crefformat{algline}{#2Line~#1#3}
\Crefformat{algline}{#2Line~#1#3}

\newcommand{\lineofalg}[2]{\hyperref[#2]{Line \ref{#1} of \Cref{#2}}}

\makeatletter
\renewcommand{\eqref}[1]{%
  \begingroup
    \leavevmode
    \hyperref[#1]{(\ref*{#1})}%
  \endgroup
}
\makeatother

\usepackage{amsopn}

\DeclareMathOperator*{\argmax}{arg\,max}

\newcommand{\minimize}{\operatornamewithlimits{minimize}}

\newcommand{\cL}{\mathcal{L}}
\newcommand{\cP}{\mathcal{P}}
\newcommand{\cX}{\mathcal{X}}

\newcommand{\ab}{\mathbf{a}}
\newcommand{\bb}{\mathbf{b}}
\newcommand{\cb}{\mathbf{c}}
\newcommand{\eb}{\mathbf{e}}
\newcommand{\gb}{\mathbf{g}}

\newcommand{\sv}{\mathbf{s}}
\newcommand{\tb}{\mathbf{t}}
\newcommand{\vb}{\mathbf{v}}
\newcommand{\wb}{\mathbf{w}}
\newcommand{\xb}{\mathbf{x}}
\newcommand{\yb}{\mathbf{y}}
\newcommand{\zb}{\mathbf{z}}
\newcommand{\zerob}{\mathbf{0}}

\newcommand{\Bb}{\mathbf{B}}
\newcommand{\Cb}{\mathbf{C}}
\newcommand{\Eb}{\mathbf{E}}
\newcommand{\Ib}{\mathbf{I}}
\newcommand{\Hb}{\mathbf{H}}
\newcommand{\Mb}{\mathbf{M}}
\newcommand{\Qb}{\mathbf{Q}}
\newcommand{\Wb}{\mathbf{W}}

\newcommand{\alphab}{\pmb{\alpha}}
\newcommand{\betab}{\pmb{\beta}}
\newcommand{\phib}{\pmb{\phi}}

\newcommand{\lambdab}{\pmb{\lambda}}

\renewcommand{\vec}[1]{\mathbf{#1}}
\newcommand{\defined}{\triangleq}
\newcommand{\Ball}{\mathcal{B}}
\newcommand{\norm}[1]{\left\| #1 \right\|}
\newcommand{\normil}[1]{\| #1 \|} 
\newcommand{\abs}[1]{\left| #1 \right|}
\newcommand{\Reals}{\mathcal{R}}

\newcommand{\grad}{\nabla}
\newcommand{\Hess}{\nabla^2}
\newcommand{\textvec}[1]{\textbf{vec}(#1)}

\hypersetup{
  breaklinks=true,
  colorlinks=true,
  linkcolor={blue!90!black},
  citecolor={blue!90!black},
  urlcolor={blue!90!black},
  pdftitle={Birkhoff interpolation models for optimization with some available derivatives},
  pdfauthor={J. Larson, M. Menickelly, E. Toler}
}

\begin{document}

\maketitle

\begin{abstract}
We consider interpolation-based derivative-free optimization in settings where only some derivatives are available.
Such situations arise in scientific computing applications involving simulations, adjoint-enabled components, legacy software, or partially differentiable models.
We introduce a Birkhoff interpolation framework that permits arbitrary patterns of derivative availability and enables the construction of local polynomial models using mixtures of function values and partial derivative information.
In contrast to Hermite interpolation approaches, the proposed framework does not require all available derivatives to be queried at every interpolation point.
We develop conditions under which the resulting interpolation systems are poised and establish corresponding model-accuracy bounds for fully quadratic interpolation models.
We develop a trust-region framework that maintains poised interpolation sets while selectively incorporating derivative information.
The method generalizes an established class of interpolation-based derivative-free optimization algorithms and naturally bridges derivative-free and derivative-based settings.
We evaluate our approach on a collection of CUTEst test problems with synthetically generated derivative-availability patterns.
\end{abstract}

\noindent\textbf{Keywords.}
Birkhoff interpolation, derivative-free optimization

\noindent\textbf{MSC codes.}
68Q25, 90C56, 41A05

\section{Introduction}
This manuscript considers numerical optimization problems where partial
derivatives with respect to all variables of the objective function are not
readily available.
In particular,
we consider unconstrained problems of the form
  \begin{equation}\label{eq:problem_def}
    \begin{aligned}
      \minimize_{\xb \in \Reals^n}\; & f(\xb), \\
    \end{aligned}
  \end{equation}
where we have access only to an oracle that returns $\partial_{x_i} f$, that is,
the partial derivatives
 with respect to some (but not all) entries $x_i$ of $\xb$.

Problems of this form exist in numerous scientific domains, including
stellarator design in the magnetic confinement fusion community, and yield
optimization in product manufacturing~\cite{Jorge2023, Landreman_2021, Dudt_Conlin_Panici_Kolemen_2023, Fuhrlander2023, Graeb2007}.
When designing a nuclear fusion stellarator, it is typical to model one set of decision variables
corresponding to the shape of the interior plasma fuel (the ``plasma parameters'') and a second set of decision variables
corresponding to the shapes of the exterior confining magnetic coils (the ``coil parameters").
Common objective functions $f$ throughout the stellarator
optimization literature assess the viability of a design
for given plasma and coil parameters~\cite{Jorge2023}.
Frequently, derivatives of $f$ with
respect to the coil parameters are relatively inexpensive to
compute and are hence available via an oracle.
On the other hand, derivatives of $f$ with respect to the plasma
parameters are either much more expensive to evaluate or no oracle for their computation has been developed.

Recent advances in algorithms have led to the development of methods
that partition the decision
variables based on properties of the objective function~\cite{bouchet2024,
lapucci2024}.
This work is similar in motivation, where the partition criterion is derivative availability.
The practical motivation for this is many scientific optimization problems where derivative information is limited across variables or model components. Certain quantities may admit inexpensive exact derivatives through adjoint or automatic differentiation techniques, while others are not readily available because they are computed through legacy software.
We seek a framework where available derivative information can be incorporated selectively, that is, without requiring access to a full gradient or Hessian.
This flexibility enables the construction of higher-quality local models while avoiding the computational expense (or unavailability) of derivatives.

Liuzzi and Risi previously described a method for problems that partition variables based on derivative availability.
Their method generated a sequence of alternating directions, generating a step based on finite differences in the block of unknown derivatives followed by a step based on gradients in the block of known derivatives~\cite{Liuzzi_2010}.
The present work differs from this approach by optimizing jointly in the known and unknown directions.
Moreover, Liuzzi and Risi limit their discussion to first-order derivatives, whereas we discuss settings that can include higher orders of derivative information when it is available.
Cecere et al. developed algorithms for finite-sum optimization problems in which some objective terms admit full gradients, while the rest produce no derivatives~\cite{Cecere_2025}.
The present work, by contrast, considers the scenario where not all gradient components are available and the objective need not be expressible as a finite sum; however, if the objective function is a finite sum, our method assumes that the known partials are indeed known for each summand.

An approach more closely related to ours is given by Fuhrl\"{a}nder and Sch\"{o}ps, who
proposed modifications to the derivative-free trust-region method \texttt{BOBYQA} to
incorporate partial derivative information~\cite{Fuhrlander2023}. Their method
augments the interpolation-based quadratic models employed in
\texttt{BOBYQA} by including available gradient information in the interpolation conditions of the (now, generally speaking, regression) models.
This approach enforces Hermite-type conditions in model construction.

Related ideas appear also in earlier work on hybrid and partially separable
optimization. For instance, the framework of Colson and Toint for partially
separable derivative-free problems\cite{Colson_2005} could be extended to the case where
the objective function restricted to a separable block admits an oracle that can compute all corresponding partial derivatives.
Abramson et
al.~examined incorporating derivative sign information to prune search
directions in generalized pattern search algorithms~\cite{Abramson_2003}.
Although not explicitly
stated, if oracle access to only a subset of partial derivatives were available, the technique of Abramson et al. could easily be adapted to this setting.
Bertolli et al.~adapted a sequential
quadratic programming framework to use available derivatives with respect to
some (but not all) parameters within a bi-level optimization problem for a nuclear
physics application~\cite{Bertolli_2012}.

Our present work can be seen as a generalization of the derivative-free optimization approach described by Conn, Scheinberg, and Vicente~\cite{CSV2009,Conn2009}.
In (one version of) that approach, one locally approximates the objective $f$ with a model $m$ using past evaluations of $f$ at a set of points satisfying a set of Lagrange interpolation conditions.
That is, the objective and the local model must take the same value at a prescribed, finite collection of interpolation nodes.
With this approximation, there are extensive theoretical results describing, for example, desirable geometries for the interpolation nodes; the supremum of the error $|f - m|$ over a given set (in particular, a trust region); and, when the approximation is employed within a trust-region method, convergence of the method to a stationary point.
Within this framework, algorithms also exist for generating poised interpolation conditions and model improvement.
Our work extends these theoretical results and algorithms to the setting where some derivatives are available and can be incorporated into the interpolation conditions for the local approximation model $m$.
We develop analysis that builds upon the classical work of Ciarlet and Raviart, which describes error bounds for polynomial interpolation based on Taylor series~\cite{Ciarlet_1972}.
In particular, they discuss the Birkhoff interpolation setting, which is a variation on the classical Hermite interpolation problem.

Both Birkhoff and Hermite interpolation of a function $f$ refers to settings where one jointly uses evaluations of $f$ and its derivatives to construct an interpolant.
However, Hermite interpolation classically assumes that if any derivative is interpolated at a point, then all derivatives of lower order must also be interpolated at the same point.
Birkhoff interpolation, by contrast, makes no such restriction. One may require, for example, that a model interpolates a first derivative value at a point without also interpolating the function value there.
We will focus on the Birkhoff interpolation setting and  use  the extended flexibility it allows compared with Hermite interpolation.

The paper is organized as follows.
We give background to the some-available-derivatives setting \Cref{sec:background}.
We then present our algorithm in \Cref{sec:alg}, analyze its convergence properties in \Cref{sec:converge}, and show experimental results in \Cref{sec:experiments}.
\Cref{sec:conclusions} concludes with a discussion of next steps.

\section{Notation}
The extension from Lagrange interpolation to Birkhoff interpolation requires an extension of notation to discuss the variety and nature of available derivatives.
Therefore, we introduce the notation here in order to ease the presentation of future material for the reader.

Multi-index notation is particularly useful in the present setting because interpolation conditions may involve arbitrary collections of mixed partial derivatives.
The resulting Birkhoff interpolation framework naturally accommodates limited derivative availability patterns across variables and derivative orders.

Let $\Reals$ and $\mathcal{Z}_+$ denote the set of real numbers and positive integers, respectively.
In this manuscript, bold characters (e.g., $\vec{x}$) denote vector and matrix
quantities, and subscripts on non-bold versions of the same symbol (e.g., $x_i$) denote the corresponding scalar entries.
$\Ib_r$ denotes the identity matrix of size $r \times r$.
Superscripts on vectors denote indices of points within a set (e.g., $\{\yb^1, \yb^2, \dots, \yb^p\}$).
Capital Roman letters like $A$ denote finite sets, and capital calligraphic letters like $\Reals$ denote possibly infinite sets.
Algorithmic parameters and multi-indices are represented by lowercase Greek letters.
Superscripts in parentheses (e.g., $\xb^{(0)}$) indicate an iteration counter for a given term within the context of an iterative method.
Throughout, $n$ denotes the dimension of the domain of $f$.

Given a multi-index $\alphab = [\alpha_1, \dots, \alpha_n]$%
 of
nonnegative integer entries, we associate a differential operator that acts on the $x_j$ coordinate $\alpha_j$ times,
\[
\partial^{\alphab}
\defined
\partial_{x_1}^{\alpha_1} \dots \partial_{x_n}^{\alpha_n}
.
\]
We denote the total order of differentiation by
$\abs{\alphab} \defined \sum_{j=1}^n \alpha_j.$
For example, in $\Reals^3$, the first-order partial derivative with respect to $x_1$ is denoted by $\alphab = [1, 0, 0]$,
the second-order partial derivative with respect to $x_1$ twice is denoted by $\alphab = [2, 0, 0]$,
and the second-order partial derivative with respect to $x_1$ once and $x_2$ once is denoted by $\alphab = [1, 1, 0]$.
We let $A$, which we call the \emph{available set}, denote the finite set of multi-indices corresponding to the
derivative information available for interpolation. Thus, $\alphab \in A$
indicates that values of $\partial^{\alphab} f$ may be used in constructing the
model; in particular, the multi-index $\zerob \in A$ whenever function values
are available.

The approach we develop in this manuscript employs local polynomial models of $f$. %
While this manuscript focuses on, and analyzes, the specific case where these polynomial models are quadratics, analogous statements and results can be made for other classes of models, for example, radial basis functions.
Throughout, we consider the natural (i.e., monomial) basis
\[
\phi \defined \left\{ 1, x_1, \dots, x_n, \frac{1}{2} (x_1)^2, x_1 x_2, x_1 x_3, \dots, x_{n-1} x_{n}, \frac{1}{2} (x_n)^2 \right\}
\]
for the space of polynomials of degree at most 2 in $n$ dimensions, which we denote $\cP_n^2$.
We frequently refer to the number $q \defined (n + 1)(n + 2)/2 - 1$ and enumerate the natural basis functions as $\phi_0, \dots, \phi_q$.
We denote the evaluation of a degree-2 monomial in $\phi$ at a given point $\yb\in\Reals^n$ by
\[
\phib_{\text{quad}}(\yb) \defined
\begin{bmatrix}
    \frac{1}{2} (y_1)^2 & y_1 y_2 & y_1 y_3 & \dots & y_{n-1} y_{n} & \frac{1}{2} (y_n)^2
\end{bmatrix}
^T.
\]
Similarly, we write the evaluation of all natural basis functions at $\yb$ as
\begin{equation} \label{eq:basis_fun_def}
\phib(\yb) \defined
\begin{bmatrix}
    1 \\
    \yb \\
    \phib_{\text{quad}}(\yb)
\end{bmatrix}
\in \Reals^{q+1}.
\end{equation}

We let $\textvec{\Hb}$ denote the upper triangular components of a symmetric matrix $\Hb$
enumerated as a vector.
When $\Hb$ is the Hessian matrix of a quadratic polynomial, the order of the vectorization follows the order of the natural basis functions $\phib_{\text{quad}}$.

\section{Background} \label{sec:background}

The optimization algorithm we present here employs multivariate
interpolation models.
We follow the setup for interpolation models employed within the context of model-based derivative-free optimization methods as described by Conn, Scheinberg, and Vicente~\cite{CSV2009}.
However, this existing setup assumes access only to a zeroth-order oracle;  that is, it assumes only the existence of an oracle corresponding to an available set $A=\zerob\in\mathcal{Z}_{+}^n$.
We extend their treatment of Lagrange interpolation models, appropriate for the $A=\zerob$ case, to Birkhoff
interpolation models, which are appropriate for more general available sets $A$.

\subsection{Birkhoff Interpolation} \label{sec:birkhoff_interp}

We construct models of the objective function $f$ by Birkhoff interpolation.
In Birkhoff interpolation, one prescribes points $y$ where the interpolation model $m$ should match the value of $f$ or one of its derivatives.
We now define the Birkhoff interpolation problem for a quadratic polynomial model.
In this problem, \emph{interpolation data} is provided via a set of pairs
\[
D = \{ (\yb^i, \alphab^i) \mid i = 0, \dots, q \}
.\]
In this manuscript, the Birkhoff interpolation problem consists of finding a quadratic polynomial $m$ centered at the point $\yb^0$ (the first argument of the first entry of $D$),
\begin{align}
    m(\yb^0 + \sv) &= c + \gb^T \sv + \frac{1}{2} \sv^T \Hb \sv, \text{ such that } 
    \label{eq:m_expression} \\
    \partial^{\alphab^i} m(\yb^i) &= \partial^{\alphab^i} f(\yb^i), 
    \qquad \forall i = 0, \dots, q.
    \label{eq:m_interp}
\end{align}
We refer to $m$ satisfying both \eqref{eq:m_expression} and \eqref{eq:m_interp} as a \emph{Birkhoff interpolation model} of $f$ for the interpolation data $D$.
We note that if $\alphab^i=\zerob^n$ for every $i$, then \eqref{eq:m_interp} recovers the Lagrange interpolation conditions;
thus, Lagrange interpolation may be viewed as a special case of Birkhoff
interpolation.
Birkhoff interpolation is also closely related to Hermite interpolation:
Hermite interpolation refers to the setting where $(\yb, \alphab) \in D$ implies that $(\yb, \betab) \in D$ for all $\betab$ such that $\beta_j \le \alpha_j$ for all $j$.
In words, Hermite interpolation requires that if any derivative is interpolated at a point $\yb$, then every lower-order derivative is also interpolated at $\yb$.
We do not put such a restriction on the conditions \eqref{eq:m_interp} and
allow, for example, a derivative interpolation condition at a point without requiring a
zeroth-order interpolation condition to hold at the same point.
Thus, Birkhoff interpolation also generalizes Hermite interpolation.

It is permitted in the Birkhoff interpolation problem to interpolate multiple derivative values at the same point, or to interpolate the same derivative multi-index at multiple points (as in the case in Lagrange interpolation).
As such,
$Y = \left\{ \yb^i | (\yb^i, \alphab^i)  \in D \right\}$
is a set that may contain duplicate elements.
We note that the conditions \eqref{eq:m_interp} and the data $D$ refer to $q+1$ \emph{inseparable} pairs $(\yb^i, \alphab^i)$; it is insufficient to specify only the unordered sets $Y$ and $A$.

Throughout the manuscript, we distinguish between the current working set of points and the accumulated history of all sampled data over the course of optimization.
$D$ will denote a finite set of $q+1$ interpolation data (points and $\alphab$) that is used to construct a local model.
In contrast, $\bar{D}$ denotes the full history of data observed over the course
of the algorithm. In general, this set will be larger than $D$ and is not used in its entirety at each iteration.
In some contexts, we also introduce $D'$ to represent a modified or candidate
set derived from $D$ (e.g., through replacement/augmentation/improvement steps).
This distinction allows us to separate the roles of model construction (which will act on $D$ and
$D'$) from total data accumulated (stored in $\bar{D}$).
The set of points $Y$ is always assumed to be associated with the set $D$ in whichever context it appears.

One strength of modeling based on available derivative information is that the corresponding optimization algorithm can take advantage of disparate costs between different derivatives.
If each available $\alphab$ has a corresponding computational cost $T_{\alphab}$ to evaluate $\partial^{\alphab} f$, then one can incorporate the relative values of $T_{\alphab}$ as a factor to determine which interpolation conditions to use during model building.
For example, given an idealized model of memory access, reverse-mode algorithmic differentiation provably exhibits $T_{\alphab}$ as a small constant (bounded above by 5) multiple of $T_{\zerob}$; see, for example,~\cite[Chapter 4]{griewank2008edp}.
We do not take advantage of this property in our numerical experiments, but we acknowledge this as a possible future direction of work.

In many applications, available set $A$ is determined by the computational structure of the underlying simulation code.
For example, derivatives with respect to geometry parameters may be available through adjoint methods, while derivatives associated with legacy software components may not be accessible. In such cases, the available set $A$ is known prior to optimization.

We emphasize that our framework does not require derivative evaluations to have negligible cost relative to function evaluations.
Rather, the setting of interest is one in which a subset of derivatives can be obtained at a substantially lower marginal cost than would be required to compute complete derivative information.
Our algorithm is intended for settings where partial derivative information improves local model quality without fundamentally changing the dominant computational expense.

\subsection{Assumptions}
We will cast two assumptions concerning problem \eqref{eq:problem_def}.
The first assumption ensures that $f$ is suitable for defining a particular class of Birkhoff interpolation problems.
The second ensures that $f$ is sufficiently well-behaved to guarantee convergence of our algorithm to a local minimizer.

Toward the first assumption, we need to assume that the available set $A$ is known and given.
As stated in \eqref{eq:m_expression}, we assume that the Birkhoff interpolation model is a quadratic polynomial.
Thus, we assume that the available set $A$ is limited to $\alphab$ satisfying $\|\alphab\|_{\infty}\leq 2$.
The results and analysis for linear interpolation models would follow as straightforward analogues.

From \eqref{eq:m_expression} and the interpolation conditions \eqref{eq:m_interp}, we see that at least one interpolation condition is a Lagrange condition; that is, $\alphab^i = \zerob$ for some $i$.
Otherwise, it is impossible to determine the constant $c$ in \eqref{eq:m_expression}.
We assume, without loss of generality, that the model center $\yb^0$ is such a point satisfying $(\yb^0,\zerob)\in D$.
Because our trust-region algorithm will need to have computed the value of $f(\yb^0)$ on every iteration in order to determine step acceptance, it is practical to assume that the value of $f(\yb^0)$ is available at the outset of every iteration.
Imposing a Lagrange interpolation condition at the model center $\yb^0$ also simplifies to the condition that $c = f(\yb^0)$, where $c$ is the constant term in \eqref{eq:m_expression}.
Although for clarity we will not make this simplification in our analysis, in practice one can remove $c$ as a degree of freedom in determining the model coefficients through an appropriate shifting of interpolation data.

In the following assumption we summarize these necessary conditions on the available derivative set and the interpolation data.

\begin{assumption} \label{assn:A_D}
    Assume that the interpolation model $m$ is a quadratic polynomial of the form \eqref{eq:m_expression}.
    Assume that $\zerob \in A$ and that $\abs{\alphab} \le 2$ for all $\alphab\in A$.
\end{assumption}

To guarantee convergence of model centers
to a point exhibiting necessary optimality conditions, we make (standard) assumptions on the objective function $f$.
We will assume that $f$ is sufficiently smooth and bounded below.
Since we are designing a monotonic trust-region algorithm, iterates are only accepted provided they yield a decrease in an incumbent objective value.
Consequently, all model centers lie in the sublevel set $\cL(\xb^{(0)}) \defined \{ \xb \in \Reals^n \mid f (\xb) \le f(\xb^{(0)})\}$.
We also impose a maximum trust radius $\Delta_{max}$, which ensures $f$ is only ever evaluated on the set
\[
\cL'(\xb^{(0)}) \defined \bigcup_{\xb \in \cL(\xb^{(0)})} \Ball(\xb, \Delta_{max}),
\]
where $\Ball(\xb, \Delta_{max})$ is the ball of radius $\Delta_{max}$ centered at $\xb$.
This is all summarized in the following assumption.
\begin{assumption} \label{assn:poised_D2lipshitz}
    Let $\grad f$ and $\Hess f$ be Lipschitz continuous on $\cL'(\xb^{(0)})$
    with respective Lipschitz constants $0 < L_{\grad f}, L_{\Hess f} < \infty$.
    Assume that there exists a lower bound $f_\star$ such that $f(\xb) \ge f_\star$ for all $\xb \in \cL'(\xb^{(0)})$.
\end{assumption}

\subsection{Linear Algebra Concerns for Birkhoff Interpolation}
We now represent the interpolation conditions \eqref{eq:m_interp} as a linear system for the model coefficients.
With this convention, and recalling that the model $m$ is centered at $\yb^0$,
the Birkhoff interpolation conditions for the model coefficients in
\eqref{eq:m_expression} may be written as
\begin{equation} \label{eq:m_interp_linsys}
    \begin{bmatrix}
        \partial^{\alphab^0} \phib(\yb^0 - \yb^0)^T \\
        \vdots \\
        \partial^{\alphab^{q}} \phib(\yb^{q} - \yb^0)^T
    \end{bmatrix}
    \begin{bNiceArray}{c}
        c \\
        \gb \\
        \textvec{\Hb}
    \end{bNiceArray}
    =
    \begin{bmatrix}
        \partial^{\alphab^0} f(\yb^0) \\
        \vdots \\
        \partial^{\alphab^{q}} f(\yb^{q})
    \end{bmatrix}
    ,
\end{equation}
where $\phib$ is the vector of basis function evaluations \eqref{eq:basis_fun_def}.
We denote the matrix in the system \eqref{eq:m_interp_linsys} as $\Mb = \Mb(\phi, D)$.
Define
\[
\Delta(Y) \defined \max_i {\left\| \yb^i - \yb^0 \right\|}
\]
as the radius of the smallest ball centered at $\yb^0$ that contains $Y$.
Entries $M_{ij}$ of $\Mb$ are scaled to different orders of magnitude in $\Delta(Y)$, corresponding to the total derivative degree $|\alphab^i|$ of the associated interpolation condition and the total polynomial degree of the associated polynomial basis function $\phi_j$.
In particular, the condition number of $\Mb$ depends on $\Delta(Y)$, and the system \eqref{eq:m_interp_linsys} can be ill-conditioned when $\Delta(Y)$ is many orders of magnitude away from 1.
For this reason, we scale the rows and columns %
of $\Mb$ to obtain a normalized matrix $\hat{\Mb} = \hat{\Mb}(\phi, D)$ by the transformation
\begin{equation} \label{eq:Mhat}
    \hat{\Mb} \defined
    \begin{bmatrix}
        \Delta(Y)^{|\alphab^0|} & & \\
        & \ddots & \\
        & & \Delta(Y)^{|\alphab^{q}|}
    \end{bmatrix}
    \Mb
    \begin{bmatrix}
        1 & & \\
        & \Delta(Y)^{-1} \Ib_{n} & \\
        & & \Delta(Y)^{-2} \Ib_{n(n+1)/2}
    \end{bmatrix}
    .
\end{equation}
The nonzero entries
\[
\hat{M}_{i j} = \partial^{\alphab^i} \phi_j \left( \frac{\yb^i - \yb^0}{\Delta(Y)} \right)
\]
are of order 1, and the interpolation conditions \eqref{eq:m_interp_linsys} can be expressed by the equivalent linear system
\begin{equation} \label{eq:m_interp_linsys_normalized}
    \hat{\Mb}
    \begin{bmatrix}
        c \\ \Delta(Y) \gb \\ \Delta(Y)^2 \textvec{\Hb}
    \end{bmatrix}
    =
    \begin{bmatrix}
        \Delta(Y)^{|\alphab^0|} \partial^{\alphab^0} f(\yb^0) \\
        \vdots \\
        \Delta(Y)^{|\alphab^{q}|} \partial^{\alphab^{q}} f(\yb^{q})
    \end{bmatrix}
    .
\end{equation}

When building a Birkhoff interpolation model using points $Y$, we always center the model at $\yb^0$ and scale $Y$ to obtain the normalized points
\begin{equation} \label{eq:rescale_Y}
    \hat{Y} = \left\{ \hat{\yb}^0, \hat{\yb}^1, \dots, \hat{\yb}^{q} \right\}
    = \left\{ \zerob, \frac{\yb^1 - \yb^0}{\Delta(Y)}, \dots, \frac{\yb^q - \yb^0}{\Delta(Y)} \right\}
    \subset \Ball(\zerob, 1).
\end{equation}
In this manuscript, we will consistently use the hat accent to denote this normalization procedure.
We note that the normalized matrix $\hat{\Mb}$ can be equivalently expressed as the interpolation matrix $\Mb$ obtained from centering and scaling the points in $D$:
\[
\hat{\Mb}(\phi, \{ (\yb^i, \alphab^i) \}) =
\Mb(\phi, \{ (\hat{\yb}^i, \alphab^i) \} )
.
\]

\section{Algorithm Description} \label{sec:alg}

We begin by developing routines for use in a convergent trust-region optimization algorithm that employs Birkhoff interpolation models.
Toward provable guarantees on approximation quality,
we first require a notion of \emph{well-poised} interpolation data $D$, which will guide our model-building procedure.
After providing this definition of well-poisedness, we will present and analyze methods for maintaining well-poised interpolation data $D$ over the course of running an optimization method.
Since we are extending the theory of~\cite{CSV2009}, we will point to the
corresponding analogues as we proceed with the following analysis.

\subsection{Birkhoff Poisedness}

The concept of poisedness measures the geometric quality of interpolation data and plays a central role in establishing bounds on model accuracy.
Informally, well-poised interpolation sets avoid nearly linearly dependent interpolation conditions and ensure that local polynomial models remain stable under perturbations in the data.

We first develop a notion of well-poisedness of Birkhoff interpolation data $D$ analogous to the notion for Lagrange interpolation presented in, for example,~\cite{CSV2009}.

When $\Mb$, or equivalently $\hat{\Mb}$, is invertible, the coefficients of the
Birkhoff interpolant $m$ are uniquely defined.
In this case, we say
that the data $D$
describes a \emph{poised} Birkhoff interpolation set.

\begin{definition}
    Given data $D = \{ (\yb^i , \alphab^i) \mid i = 0, \dots, q \}$,
    we say $D$ is \emph{poised for Birkhoff interpolation} if the associated matrix $\Mb(\phi, D)$ is invertible or, equivalently, if the normalized matrix $\hat{\Mb}(\phi, D)$ is invertible.
\end{definition}

Toward a definition of well-poised Birkhoff interpolation data, we define a set of Birkhoff interpolation polynomials, which are an obvious analogue to Lagrange interpolation polynomials.

\begin{definition}
    Given a set $D$ of $q + 1$ interpolation conditions,
    the associated Birkhoff interpolation polynomials
    \[
    \lambda_i(\xb), \quad i = 0, \dots, q
    \]
    are the quadratic polynomials that satisfy
    \begin{equation} \label{eq:HIP_conditions}
        \partial^{\alphab^{i'}} \lambda_i(\hat{\yb}^{i'}) = \delta_{i i'}
    \end{equation}
    for all $i, i' = 0, \dots, q$.
    That is, the $i$th Birkhoff interpolation polynomial, when differentiated by the multi-index $\alphab^i$ and evaluated at the corresponding normalized point $\hat{\yb}^i = (\yb^i - \yb^0)/\Delta(Y)$, evaluates to $1$;
    at all other $(\hat{\yb}^{i'}, \alphab^{i'})\in D$, the corresponding derivative of $\lambda^i$ evaluates to $0$.
\end{definition}

    We emphasize that the conditions describing the Birkhoff interpolation polynomials \eqref{eq:HIP_conditions} require normalizing the points $Y$ to the unit ball (in the $\alphab \neq \zerob$ case).
    Without normalization, the Birkhoff interpolation polynomials are scale-dependent in the sense that $\partial^{\alphab} [\lambda_i(\xb/\Delta(Y))] = \Delta(Y)^{-|\alphab|} \partial^{\alphab}\lambda_i(\xb/\Delta(Y))$ by the chain rule.
    The consequences of rescaling become apparent in the sequel to this discussion in \cref{def:Lambda_poised}.

In the derivative-free case where $A = \{\zerob\}$, Birkhoff interpolation polynomials are simply Lagrange interpolation polynomials.
The Birkhoff interpolation polynomials retain properties similar to those of the Lagrange interpolation polynomials.

The primary distinction from the classical Lagrange interpolation setting is that interpolation conditions are no longer associated solely with function values.
Consequently, the geometry of the interpolation set depends jointly on sample locations and derivative-selection patterns.
Despite this added flexibility, the resulting stability bounds retain essentially the same structure as in the classical derivative-free case.

These similarities are stated in the following two theorems.
The first is an analog of~\cite[Lemma 3.4]{CSV2009}; the second is an analog of~\cite[Lemma 3.5]{CSV2009}.

\begin{lemma} \label{thm:BIPs_unique}
    When $D$ is poised for Birkhoff interpolation, the Birkhoff interpolation polynomials exist and are unique.
\end{lemma}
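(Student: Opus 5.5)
The plan is to reduce the statement to linear algebra on the normalized matrix $\hat{\Mb} = \hat{\Mb}(\phi, D)$, mirroring the proof of~\cite[Lemma 3.4]{CSV2009}. Every quadratic polynomial $\lambda \in \cP_n^2$ can be written uniquely as $\lambda(\xb) = \sum_{j=0}^{q} a_j \phi_j(\xb)$ with coefficient vector $\ab \in \Reals^{q+1}$, because $\phi$ is a basis of $\cP_n^2$. Differentiation is linear, so for each $i'$ we have
\[
\partial^{\alphab^{i'}} \lambda(\hat{\yb}^{i'}) = \sum_{j=0}^q a_j\, \partial^{\alphab^{i'}} \phi_j(\hat{\yb}^{i'}) = \bigl(\hat{\Mb} \ab\bigr)_{i'},
\]
using the identity $\hat{M}_{i'j} = \partial^{\alphab^{i'}} \phi_j(\hat{\yb}^{i'})$ stated after \eqref{eq:Mhat} (equivalently, $\hat{\Mb}(\phi,D) = \Mb(\phi,\{(\hat{\yb}^i,\alphab^i)\})$).

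Under this identification, the defining conditions \eqref{eq:HIP_conditions} for $\lambda_i$ become the square linear system $\hat{\Mb}\ab^{i} = \eb_i$. Here $\eb_i$ is the $i$th standard basis vector of $\Reals^{q+1}$, indexed from $0$. Since $D$ is poised, $\hat{\Mb}$ is invertible by definition. Hence $\ab^i = \hat{\Mb}^{-1}\eb_i$ exists and is the unique solution, and $\lambda_i = \sum_j a^i_j \phi_j$ exists. Collecting all $i$ at once, the coefficient matrix $[\ab^0,\dots,\ab^q]$ is exactly $\hat{\Mb}^{-1}$.

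Uniqueness at the level of polynomials comes from the uniqueness of coefficients in the basis $\phi$: two quadratics satisfying \eqref{eq:HIP_conditions} have coefficient vectors solving the same nonsingular system, so the vectors agree and the polynomials coincide.

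None of these steps is a real obstacle. The only point needing care is to use the normalized points $\hat{\yb}^i$, not the raw $\yb^i$, when matching the conditions to the rows of $\hat{\Mb}$ rather than $\Mb$. Even if one preferred $\Mb$, invertibility transfers between the two matrices because the diagonal scalings in \eqref{eq:Mhat} are nonsingular whenever $\Delta(Y)>0$.
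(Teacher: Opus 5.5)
Your proposal is correct and follows essentially the same route as the paper. Both arguments write $\lambda_i$ in the natural basis, recognize the defining conditions \eqref{eq:HIP_conditions} as the square system $\Mb(\phi,\{(\hat{\yb}^i,\alphab^i)\})\,\ab^i = \eb^i$ with that matrix equal to $\hat{\Mb}$, and use poisedness (invertibility) to get existence and uniqueness of the coefficients, and hence of the polynomials; your added remarks (the coefficient matrix is $\hat{\Mb}^{-1}$, and the caveat $\Delta(Y)>0$) match what the paper uses later and implicitly assumes.
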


\begin{proof}
    Let $0 \le i \le q$ be given. Write the $i$th Birkhoff interpolating polynomial as
    \[
    \lambda_i(\hat{\yb}) = c^i + (\gb^i)^T \hat{\yb} + \frac{1}{2} \hat{\yb}^T (\Hb^i) \hat{\yb}.
    \]
    From the definition of $\lambda_i$, we have the linear system
    \begin{equation}\label{eq:linear_system}
    \begin{bmatrix}
        \partial^{\alphab^0} \phib(\hat{\yb}^0)^T \\
        \vdots \\
        \partial^{\alphab^{q}} \phib(\hat{\yb}^{q})^T
    \end{bmatrix}
    \begin{bNiceArray}{c}
        c^i \\
        \gb^i \\
        \textvec{\Hb^i}
    \end{bNiceArray}
    =
    \eb^i.
    \end{equation}
    Here, we have assumed the convention that the components of the
    right-hand side $\eb^i$ are indexed beginning at 0, so $\eb^0 = [1, 0,
    \dots, 0]^T$.
    We observe that the left-hand side of \eqref{eq:linear_system} is $\Mb(\phi, \{ (\hat{\yb}^i, \alphab^i) \})$ from equation
    \eqref{eq:m_interp_linsys}, which is invertible because $D$ is poised.
    Hence, the parameters $c^i$, $\gb^i$ and $\Hb^i$ are uniquely determined. Since
    $i$ was arbitrary, this concludes the proof.
\end{proof}

\begin{theorem} \label{thm:m_lincomb_Birkhoff_polys}
    An interpolating
    polynomial $m(\xb)$ satisfying the conditions
    \eqref{eq:m_interp} may be expressed in terms of the Birkhoff
    interpolation polynomials as
    \begin{equation} \label{eq:m_from_BIPs}
        m(\xb) =
        \sum_{i=0}^{q} \Delta(Y)^{|\alphab^i|} \partial^{\alphab^i} f(\yb^i)
        \lambda_{i} \left(\frac{\xb - \yb^0}{\Delta(Y)}\right) .
    \end{equation}
\end{theorem}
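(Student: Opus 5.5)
The plan is to verify directly that the right-hand side of \eqref{eq:m_from_BIPs}, call it $\tilde m(\xb)$, is a quadratic polynomial satisfying the interpolation conditions \eqref{eq:m_interp}. Uniqueness then identifies it with $m$. Throughout I assume, as \Cref{thm:BIPs_unique} implicitly does, that $D$ is poised, so that $\Mb(\phi,D)$ is invertible and the $\lambda_i$ exist and are unique.

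First, $\tilde m$ is quadratic. Each $\lambda_i$ is a quadratic polynomial, and composing it with the affine map $\xb \mapsto (\xb-\yb^0)/\Delta(Y)$ keeps it quadratic. A linear combination of such functions with the scalar coefficients $\Delta(Y)^{|\alphab^i|}\partial^{\alphab^i} f(\yb^i)$ is therefore a quadratic polynomial, and it can be rewritten in the centered form \eqref{eq:m_expression}.

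Second, I check the interpolation conditions. Fix $i'$ and apply $\partial^{\alphab^{i'}}$ to $\tilde m$ at $\yb^{i'}$. By the chain rule for the scaling $\xb\mapsto(\xb-\yb^0)/\Delta(Y)$, each derivative in $\xb$ contributes a factor $\Delta(Y)^{-1}$, so
\[
\partial^{\alphab^{i'}}\Bigl[\lambda_i\Bigl(\tfrac{\xb-\yb^0}{\Delta(Y)}\Bigr)\Bigr]\Big|_{\xb=\yb^{i'}} = \Delta(Y)^{-|\alphab^{i'}|}\,(\partial^{\alphab^{i'}}\lambda_i)(\hat{\yb}^{i'}) = \Delta(Y)^{-|\alphab^{i'}|}\delta_{ii'},
\]
where the last equality is \eqref{eq:HIP_conditions}. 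Summing over $i$, only the $i=i'$ term survives. Its factor $\Delta(Y)^{|\alphab^{i'}|}$ cancels $\Delta(Y)^{-|\alphab^{i'}|}$, leaving $\partial^{\alphab^{i'}} f(\yb^{i'})$. Hence $\tilde m$ satisfies \eqref{eq:m_interp}.

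Finally, both $m$ and $\tilde m$ are quadratics satisfying \eqref{eq:m_interp}. Their coefficient vectors therefore both solve \eqref{eq:m_interp_linsys}, and since $\Mb$ is invertible the two vectors coincide, so $m=\tilde m$. The only delicate step is the bookkeeping of the scaling factors in the chain rule, namely confirming that the power of $\Delta(Y)$ in the coefficients exactly cancels the one introduced by differentiating the rescaled argument. The remaining steps are routine.
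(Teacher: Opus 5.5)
Your proof is correct and follows the paper's argument: the paper likewise uses the chain rule to show that applying $\partial^{\alphab^{i'}}$ to the $i$th term and evaluating at $\yb^{i'}$ gives $\Delta(Y)^{|\alphab^i|-|\alphab^{i'}|}\partial^{\alphab^i} f(\yb^i)\,\delta_{ii'}$, so the sum collapses to $\partial^{\alphab^{i'}} f(\yb^{i'})$. Your final uniqueness step, via invertibility of $\Mb$, makes explicit the identification of the right-hand side with $m$, which the paper leaves implicit.
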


\begin{proof}
  By the chain rule, we have that
  \[
  \left(\frac{\partial}{\partial x_1}\right)^{\alpha_1} \dots \left(\frac{\partial}{\partial x_n}\right)^{\alpha_n}
  \left[ \lambda_{i} \left(\frac{\xb - \yb^0}{\Delta(Y)}\right) \right]
  =
  \left( \frac{1}{\Delta(Y)} \right)^{\abs{\alphab}}
  \partial^{\alphab} \lambda_{i} \left(\frac{\xb - \yb^0}{\Delta(Y)}\right).
  \]
  Differentiating equation \eqref{eq:m_from_BIPs} according to $\alphab^{i'}$ and evaluating at $\yb^{i'}$ yields
  \begin{align*}
      \partial^{\alphab^{i'}} m(\yb^{i'})
        &= \sum_{i=0}^{q} \Delta(Y)^{|\alphab^i|} \partial^{\alphab^i} f(\yb^i)
        \left(\frac{\partial}{\partial x_1}\right)^{\alpha_1} \dots \left(\frac{\partial}{\partial x_n}\right)^{\alpha_n}
        \left[\lambda_{i} \left(\frac{\xb - \yb^0}{\Delta(Y)}\right) \right]_{\xb = \yb^{i'}} \\
      &= \sum_{i=0}^{q} \Delta(Y)^{|\alphab^i| - |\alphab^{i'}|} \partial^{\alphab^i} f(\yb^i)
        \partial^{\alphab^{i'}} \lambda_{i} \left( \hat{\yb}^{i'} \right) \\
      &= \sum_{i=0}^{q} \Delta(Y)^{|\alphab^i| - |\alphab^{i'}|} \partial^{\alphab^i} f(\yb^i)
        \delta_{i i'} \\
      &= \partial^{\alphab^{i'}} f(\yb^{i'}).
  \end{align*}
  This holds for all $i' = 0, \dots, q$, which are precisely the conditions \eqref{eq:m_interp}.
\end{proof}

Birkhoff interpolation polynomials also induce a \emph{poisedness constant}
based on upper bounds on the values of Birkhoff interpolating polynomials over a domain; this constant is analogous to the poisedness constant from Lagrange interpolation.

\begin{definition} \label{def:Lambda_poised}
    We say $D$ is $\Lambda$-poised with respect to a multi-index set $A$ in a region $\cX \subset \Reals^n$ if and only if
    \[
    \left| \partial^{\alphab} \lambda_i \left( \frac{\xb - \yb^0}{\Delta(Y)} \right) \right| \le \Lambda
    \quad
    \]
    uniformly
    for all $i = 0, \dots, q$,
    for all $\alphab \in A$,
    and for all $\xb \in \cX$.
\end{definition}

    \Cref{def:Lambda_poised} of the Birkhoff poisedness constant coincides with the definition of the Lagrange poisedness constant presented in, for example,~\cite{CSV2009}.
    To see this, let $A=\{\zerob\}$, and let the points $Y=\{\yb^0, \dots, \yb^q\}$ be given and poised for Lagrange interpolation.
    Let $\lambda_i^L$ be the Lagrange interpolating polynomials satisfying $\lambda_i^L(\yb^{i'}) = \delta_{i i'}$, and let $\lambda_i^B$ be the Birkhoff interpolating polynomials satisfying $\lambda_i^B((\yb^{i'} - \yb^0)/\Delta(Y)) = \delta_{i i'}$.
    By \Cref{thm:BIPs_unique} and~\cite[Lemma 3.4]{CSV2009}, both sets of polynomials $\{\lambda_i^L\}$ and $\{\lambda_i^B\}$ are unique.
    Moreover, they are related by the identity
    $\lambda_i^L(\xb) = \lambda_i^B((\xb - \yb^0)/\Delta(Y))$ for each $i$.
    It immediately follows that
    \[
    \max_{0 \le i \le q} \max_{\xb \in \cX}
    \abs{\lambda_i^L(\xb)} \le \Lambda
    \quad \iff \quad
    \max_{0 \le i \le q} \max_{\xb \in \cX}
    \abs{\lambda_i^B\left( \frac{\xb - \yb^0}{\Delta(Y)}\right)} \le \Lambda.
    \]
    In other words, $Y$ is $\Lambda$-poised in the Lagrange sense if and only if $D = \{(\yb^i, \zerob)\}$ is $\Lambda$-poised in the Birkhoff sense with respect to $A=\{ \zerob \}$.

    In the context of a trust-region framework, we will always set $\cX = \Ball(\yb^0, \Delta)$ in \cref{def:Lambda_poised}, where $\Delta$ is some trust-region radius.
    When $Y$ is strictly contained in $\Ball(\yb^0, \Delta)$, the scaling constant appearing in \cref{def:Lambda_poised} is specifically $\Delta(Y)$, not $\Delta$.

\Cref{fig:heatmap_poisedness} shows the $\Lambda$-poisedness after adding interpolation
information in two possible unit trust regions,
centered at $(0,0)$. In both panels, the initial interpolation data is $D = \{(\yb^0, [0,0]), (\yb^1,[0,0]), (\yb^1, [1,0]), (\yb^1, [1,0])\}$, where $\yb^0 = (0,0)$ and $\yb^1 = (\sqrt{2}/2,\sqrt{2}/2)$.
The left panel illustrates the value of $\Lambda$
provided we add to $D$ the two data $([x_1,x_2], [0,0])$ and $([x_1,x_2], [1,0])$ .
The right panel augments the initial interpolation set $D$ with $((\sqrt{2}/2,-\sqrt{2}/2), [0,0])$ and displays the resulting $\Lambda$ values when we add to $D$ the datum $([x_1,x_2], [0,0])$. %
Darker
regions correspond to smaller values of $\Lambda$ and therefore more
well-poised sets.

These plots show how the quality of a Birkhoff interpolation set depends
not only on the spatial distribution of interpolation points but also on the
distribution of derivative information across those points. In particular,
certain combinations of function and derivative conditions can produce considerably different poisedness constants, even when the underlying point set is
similar.

\begin{figure}
    \centering
    \includegraphics[width=0.49\linewidth]{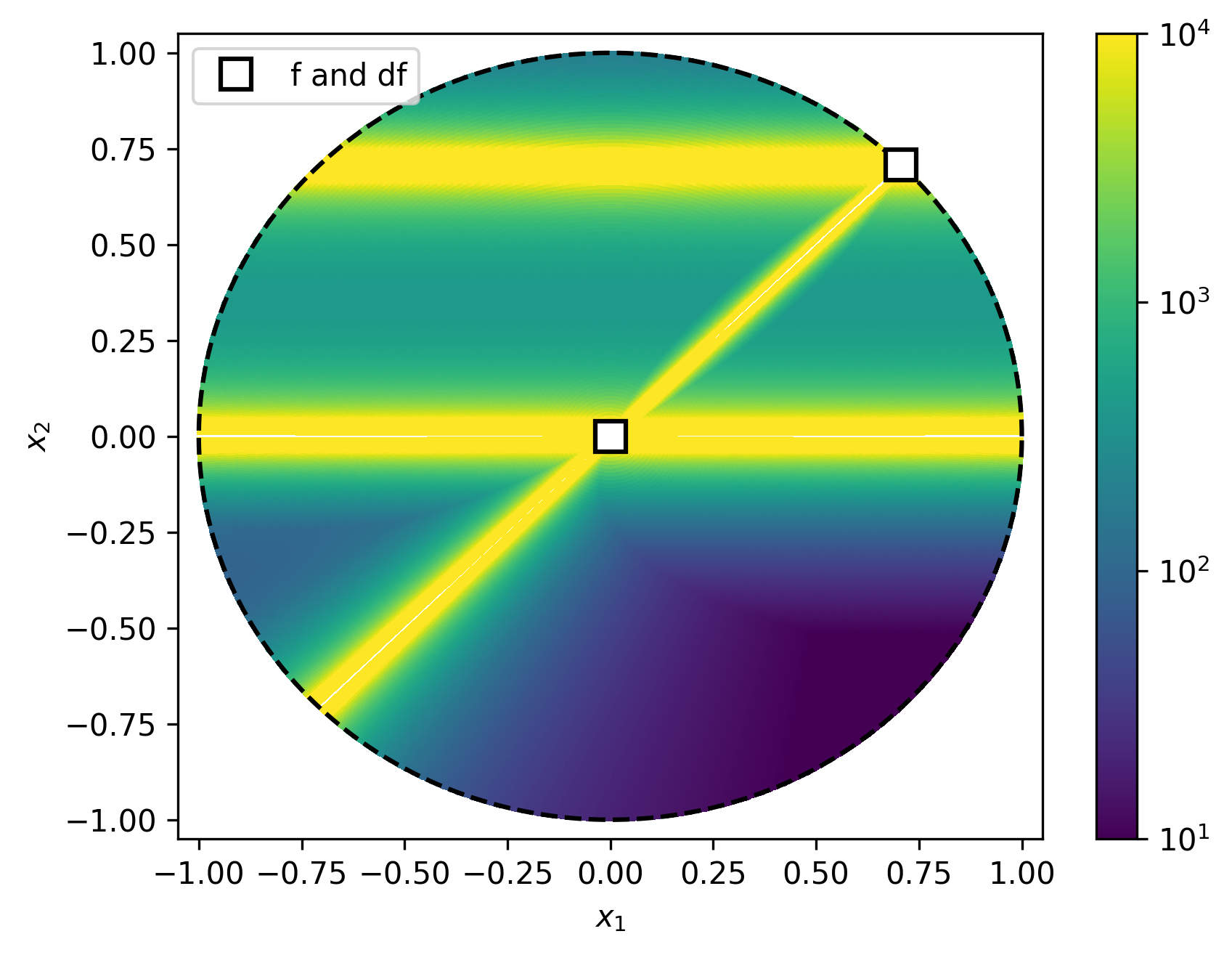}
    \includegraphics[width=0.49\linewidth]{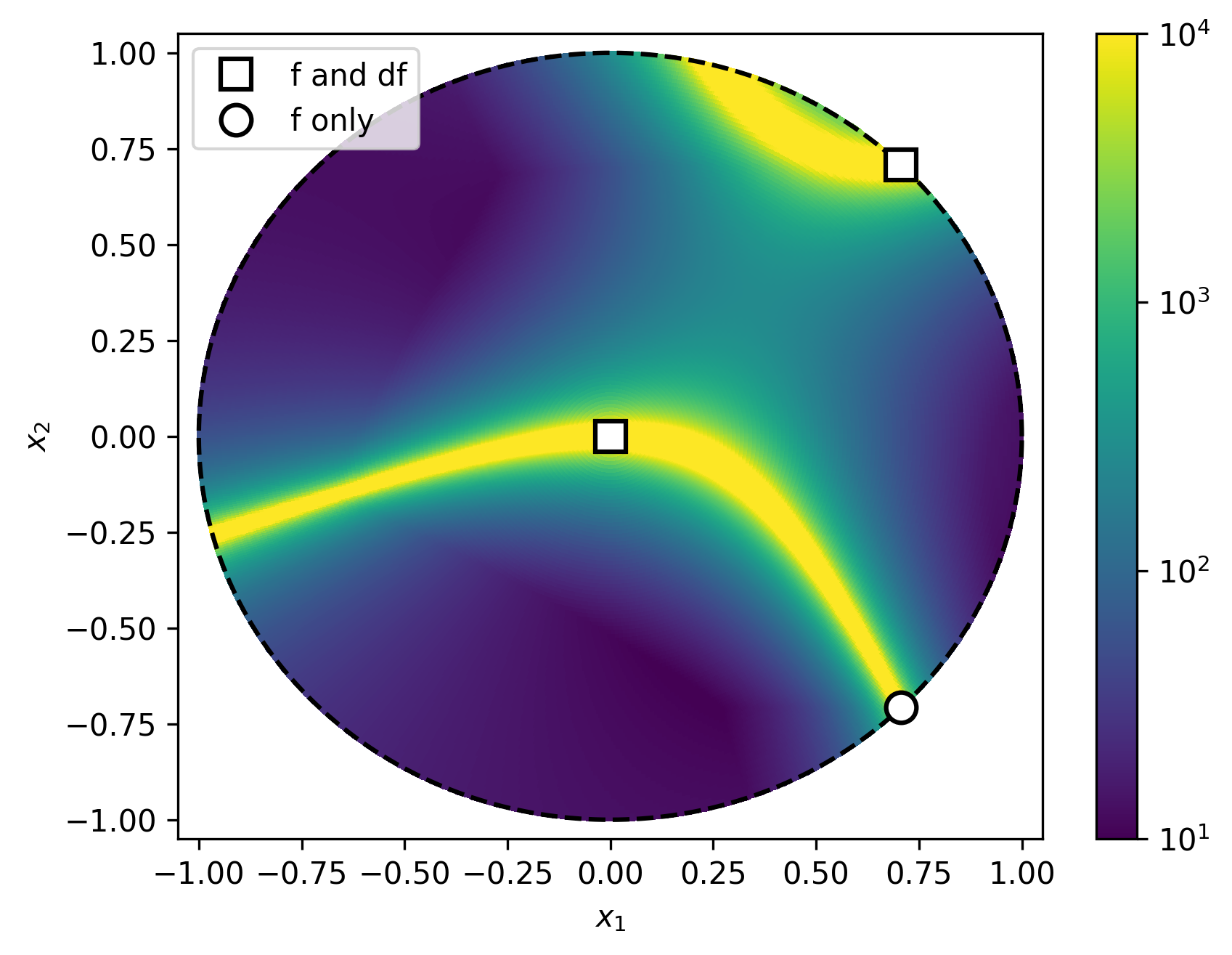}
    \caption{
Heatmaps of the Birkhoff poisedness constant $\Lambda$ for two interpolation
configurations in $\Ball(\zerob,1)$. Smaller values are better poised. The left panel
adds both function and derivative interpolation conditions at the candidate
point, while the right panel adds only a function-value condition after fixing
an additional interpolation point.}
    \label{fig:heatmap_poisedness}
\end{figure}

Analogous to Lagrange interpolation polynomials, an equivalent definition
of the poisedness constant $\Lambda$ can be derived from a linear algebra perspective.
Given poised data $D$,
we can conclude from the uniqueness of the interpolating polynomial $m(\xb)$ guaranteed by \Cref{thm:BIPs_unique},
and the form of $m(\xb)$ given in
\Cref{thm:m_lincomb_Birkhoff_polys}, that
\[
\phi_j \left( \hat{\xb} \right) =
\sum_{i=0}^{q} %
\partial^{\alphab^i} \phi_j\left( \hat{\yb}^i \right)
\lambda_{i} \left( \hat{\xb} \right)
\]
for all $j = 0, \dots, q$.
Equivalently, in matrix notation,
\begin{equation} \label{eq:lambda_linsys_normalized}
    \hat{\Mb}^T
    \begin{bmatrix}
        \lambda_0( \hat{\xb} ) \\
        \vdots \\
        \lambda_q( \hat{\xb} )
    \end{bmatrix}
    =
    \phib(\hat{\xb})
    .
\end{equation}
Hence from \Cref{def:Lambda_poised}, we conclude that
$D$ is $\Lambda$-poised with respect to $A$ in $\cX$ if and only if
for all $\xb \in \cX$ there exists a vector $\lambdab(\hat{\xb})$
such that $\hat{\Mb}^T \lambdab(\hat{\xb}) = \phib(\hat{\xb})$
and $\max_{\alphab \in A} \normil{\partial^{\alphab} \lambdab(\hat{\xb})}_{\infty} \le \Lambda$.
From this equivalent form, we may relate the poisedness constant
$\Lambda$ to $\normil{\hat{\Mb}^{-1}}$.
For this, we state a direct translation of~\cite[Lemma 3.10]{CSV2009}.

\begin{lemma} \label{lem:CSV3.10}
    There exist a number $\sigma_{\infty} > 0$ such that, for any choice of
    $\vb \in \Reals^{q+1}$ satisfying $\norm{\vb}_{\infty} = 1$,
    there exists a $\hat{\xb} \in \Ball(\zerob, 1)$ such that $\abs{\vb^T \phib(\hat{\xb})}
    \ge \sigma_{\infty}$.
\end{lemma}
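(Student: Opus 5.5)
The plan is the standard compactness argument used for~\cite[Lemma 3.10]{CSV2009}. Define
\[
g(\vb) \defined \max_{\hat{\xb} \in \Ball(\zerob,1)} \abs{\vb^T \phib(\hat{\xb})}
\]
for $\vb \in \Reals^{q+1}$. We then set $\sigma_\infty \defined \min\{ g(\vb) \mid \norm{\vb}_\infty = 1\}$ and show that this minimum exists and is strictly positive. With this choice the statement follows immediately: for any $\vb$ with $\norm{\vb}_\infty = 1$, a maximizer $\hat{\xb}$ in the definition of $g(\vb)$ satisfies $\abs{\vb^T\phib(\hat{\xb})} = g(\vb) \ge \sigma_\infty$.

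The key steps are as follows.

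\textbf{Step 1: $g$ is well defined.} The map $(\vb,\hat{\xb}) \mapsto \abs{\vb^T \phib(\hat{\xb})}$ is continuous, being the absolute value of a polynomial in both arguments. The ball $\Ball(\zerob,1)$ is compact, so the maximum defining $g(\vb)$ is attained.

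\textbf{Step 2: $g$ is continuous.} The quantity $M \defined \max_{\hat{\xb}\in\Ball(\zerob,1)} \norm{\phib(\hat{\xb})}_1$ is finite. Hence
\[
\abs{g(\vb) - g(\wb)} \le \max_{\hat{\xb}} \abs{(\vb-\wb)^T \phib(\hat{\xb})} \le M \norm{\vb - \wb}_\infty,
\]
so $g$ is Lipschitz continuous.

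\textbf{Step 3: the minimum exists.} The unit sphere $\{\vb \mid \norm{\vb}_\infty = 1\}$ is compact, so $g$ attains its minimum there at some $\vb^\ast$.

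\textbf{Step 4: positivity.} This is the only step with real content. Suppose for contradiction that $g(\vb^\ast) = 0$. Then the quadratic polynomial $p(\hat{\xb}) = (\vb^\ast)^T \phib(\hat{\xb})$ vanishes identically on $\Ball(\zerob,1)$. That ball has nonempty interior. A polynomial vanishing on an open set has all of its derivatives zero at an interior point, and hence is the zero polynomial.

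It remains to conclude that $\vb^\ast = \zerob$. This uses the fact that the natural basis functions $\phi_0, \dots, \phi_q$ are linearly independent as polynomials. They are distinct monomials, scaled by $1$ or $1/2$, so a vanishing linear combination must have all coefficients zero. Concretely, one can read off the coefficients of $p$ by differentiating at $\zerob$:
\begin{itemize}
\item $p(\zerob)$ gives the constant coefficient;
\item $\grad p(\zerob)$ gives the linear coefficients;
\item $\Hess p$ gives the quadratic coefficients, via $\textvec{\cdot}$.
\end{itemize}
All of these vanish, so $\vb^\ast = \zerob$. This contradicts $\norm{\vb^\ast}_\infty = 1$, and therefore $\sigma_\infty = g(\vb^\ast) > 0$.

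The main obstacle, such as it is, lies in Step 4: making sure the linear independence is stated in a form adapted to the scaled basis $\phi$. Reading off the coefficients by differentiation at the origin settles this cleanly. No Birkhoff-specific structure enters, because the lemma concerns only $\phi$ on the unit ball. This is why it is a direct translation of~\cite[Lemma 3.10]{CSV2009}, and one could also simply cite that result.
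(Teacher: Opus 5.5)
Your proposal is correct and is essentially the standard argument behind \cite[Lemma~3.10]{CSV2009}, which the paper cites rather than reproves: $\vb \mapsto \max_{\hat{\xb} \in \Ball(\zerob,1)} \abs{\vb^T \phib(\hat{\xb})}$ is continuous and vanishes only at $\vb = \zerob$ by linear independence of the natural basis, so its minimum over the compact set $\{\vb : \norm{\vb}_\infty = 1\}$ gives a positive $\sigma_\infty$. This compactness argument does not produce the explicit value $\sigma_\infty = 1/4$ that the paper quotes afterwards for the natural quadratic basis, which needs a separate direct computation, but that value is not part of the lemma's statement.
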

The constant $\sigma_{\infty}$ in \Cref{lem:CSV3.10} depends only on the choice of basis.
In particular, the natural basis of monomials of total degree at most 2 has $\sigma_{\infty} = 1/4$.
When $\norm{\vb}_{\infty} \ne 1$ and $\vb \ne \zerob$, one can apply this lemma to $\vb / \norm{\vb}_{\infty}$ to conclude that
\[
\max_{\hat{\xb} \in \Ball(\zerob, 1)}
\abs{\vb^T \phib(\hat{\xb})} \ge \norm{\vb}_{\infty} \sigma_{\infty}
.
\]

We now show explicitly how $\Lambda$ is related to $\normil{\hat{\Mb}^{-1}}$, analogous to~\cite[Theorem 3.14]{CSV2009}.
In practice,
$\normil{\hat{\Mb}^{-1}}$
serves as a numerical conditioning measure for the interpolation system. Large values indicate that the interpolation conditions are close to linearly dependent, which may lead to unstable model coefficients. %

\begin{theorem} \label{thm:Lambda_iff_invnorm}
    Let \Cref{assn:A_D} hold.
    If $\normil{\hat{\Mb}^{-1}} \le \Lambda/\sqrt{q+1}$, then $D$ is $\Lambda$-poised for Birkhoff interpolation with respect to
    $A$ in $\Ball \defined \Ball(\yb^0, \Delta(Y))$.
    Conversely, if $D$ is $\Lambda$-poised for Birkhoff interpolation with
    respect to $A$ in $\Ball$, then $\normil{\hat{\Mb}^{-1}} \le C \Lambda$
    for a constant $C = C(n, \phi)$ that depends only on the problem dimension and the choice of the natural basis.
\end{theorem}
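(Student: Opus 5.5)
We use the linear-algebra characterization \eqref{eq:lambda_linsys_normalized}, $\hat{\Mb}^T \lambdab(\hat{\xb}) = \phib(\hat{\xb})$. The forward direction bounds each entry of $\partial^{\alphab}\lambdab$ by norm inequalities. The converse tests the Lagrange-type condition $\alphab = \zerob \in A$ against \Cref{lem:CSV3.10}. Throughout, $\norm{\cdot}$ without subscript is the spectral norm, so $\normil{\hat{\Mb}^{-T}} = \normil{\hat{\Mb}^{-1}}$. Note that $\xb \in \Ball(\yb^0, \Delta(Y))$ holds exactly when $\hat{\xb} = (\xb - \yb^0)/\Delta(Y) \in \Ball(\zerob, 1)$, so all statements can be made on the unit ball.

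\emph{Forward direction.} The hypothesis $\normil{\hat{\Mb}^{-1}} \le \Lambda/\sqrt{q+1} < \infty$ means $\hat{\Mb}$ is invertible, so $D$ is poised. Hence $\lambdab(\hat{\xb}) = \hat{\Mb}^{-T}\phib(\hat{\xb})$, with $\hat{\Mb}$ independent of $\hat{\xb}$. Differentiating with respect to $\hat{\xb}$ gives
\[
\partial^{\alphab}\lambdab(\hat{\xb}) = \hat{\Mb}^{-T}\partial^{\alphab}\phib(\hat{\xb}).
\]
Next, for every $\alphab$ with $\abs{\alphab} \le 2$ (\Cref{assn:A_D}) and every $\hat{\xb} \in \Ball(\zerob,1)$, we have $\normil{\partial^{\alphab}\phib(\hat{\xb})}_\infty \le 1$. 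To see this, go through the basis entry by entry:
\begin{itemize}
\item the values are $1$, $\hat{x}_i$, $\tfrac12 \hat{x}_i^2$ and $\hat{x}_i\hat{x}_j$, all bounded by $1$ in absolute value on the unit ball;
\item first derivatives are $0$, $1$, $\hat{x}_i$ or $\hat{x}_j$;
\item second derivatives are the constants $0$ or $1$.
\end{itemize}
It follows that $\normil{\partial^{\alphab}\phib(\hat{\xb})}_2 \le \sqrt{q+1}$. Therefore
\[
\normil{\partial^{\alphab}\lambdab(\hat{\xb})}_\infty \le \normil{\partial^{\alphab}\lambdab(\hat{\xb})}_2 \le \normil{\hat{\Mb}^{-1}}\sqrt{q+1} \le \Lambda
\]
for all $\alphab \in A$ and all $\xb \in \Ball$, which is exactly \Cref{def:Lambda_poised}.

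\emph{Converse.} $\Lambda$-poisedness presupposes that the $\lambda_i$ exist, which we read as $D$ being poised, so $\hat{\Mb}$ is invertible. Since $\zerob \in A$, we have $\normil{\lambdab(\hat{\xb})}_\infty \le \Lambda$ on $\Ball(\zerob,1)$. Pick $\wb$ with $\normil{\wb}_2 = 1$ attaining $\normil{\hat{\Mb}^{-1}\wb}_2 = \normil{\hat{\Mb}^{-1}}$, and set $\vb = \hat{\Mb}^{-1}\wb$. Then $\vb^T\phib(\hat{\xb}) = \vb^T\hat{\Mb}^T\lambdab(\hat{\xb}) = \wb^T\lambdab(\hat{\xb})$, so
\[
\abs{\vb^T\phib(\hat{\xb})} \le \normil{\wb}_1\normil{\lambdab(\hat{\xb})}_\infty \le \sqrt{q+1}\,\Lambda
\quad\text{for all } \hat{\xb} \in \Ball(\zerob,1).
\]
On the other hand, the consequence of \Cref{lem:CSV3.10} stated after it gives some $\hat{\xb} \in \Ball(\zerob,1)$ with
\[
\abs{\vb^T\phib(\hat{\xb})} \ge \sigma_\infty\normil{\vb}_\infty \ge \sigma_\infty\normil{\vb}_2/\sqrt{q+1} = \sigma_\infty\normil{\hat{\Mb}^{-1}}/\sqrt{q+1}.
\]
Combining the two bounds yields $\normil{\hat{\Mb}^{-1}} \le (q+1)\Lambda/\sigma_\infty$. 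So $C = (q+1)/\sigma_\infty$ works, which equals $4(q+1)$ for the natural basis and depends only on $n$ and $\phi$.

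\emph{Main obstacle.} The converse step is the main difficulty. One must make sure the $\Lambda$-bound is applied at the same normalized points used by \Cref{lem:CSV3.10}, which is why the domain is taken to be $\Ball(\yb^0, \Delta(Y))$ rather than a larger trust region. One must also keep the norm conversions between $\ell_1$, $\ell_2$ and $\ell_\infty$ straight. The derivative conditions in $A$ play no role in this direction: the bound on $\alphab = \zerob$ alone suffices.
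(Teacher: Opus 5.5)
Your proof is correct. The forward direction is the paper's argument with the norm inequality split slightly differently. The paper bounds $\abs{\partial^{\alphab}\lambda_i}$ by the maximum row sum $\normil{\hat{\Mb}^{-T}}_\infty \le \sqrt{q+1}\,\normil{\hat{\Mb}^{-T}}$, whereas you pass through $\normil{\partial^{\alphab}\phib(\hat{\xb})}_2 \le \sqrt{q+1}$.

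For the converse you take a different route. The paper introduces the norm $\normil{p}_A = \max_{\alphab\in A}\max_{\xb\in\Ball}\abs{\partial^{\alphab}p(\hat{\xb})}$ on $\cP_n^2$. It uses $\zerob\in A$ and \Cref{lem:CSV3.10} to get the norm-equivalence bound $\normil{\vb}\le(\sqrt{q+1}/\sigma_\infty)\normil{p}_A$ for the coefficient vector $\vb$ of $p$. It applies this to each $\lambda_i$, whose coefficient vector $\vb^i$ is the $i$th column of $\hat{\Mb}^{-1}$, and then combines the columns by Cauchy--Schwarz. You instead use the extremal-vector argument from the CSV proof. You choose the maximizing unit vector $\wb$, set $\vb=\hat{\Mb}^{-1}\wb$, and test the single polynomial $\vb^T\phib=\wb^T\lambdab$ against \Cref{lem:CSV3.10}. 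Both routes give $C=(q+1)/\sigma_\infty$.

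Each route has an advantage:
\begin{itemize}
\item Yours is shorter and needs no verification that $\normil{\cdot}_A$ is a norm. It also makes explicit that only the $\alphab=\zerob$ part of $\Lambda$-poisedness is used. In the paper this is hidden inside $\normil{\lambda_i}_A\le\Lambda$.
\item The paper's column-wise version directly yields a per-polynomial coefficient bound. It also bounds the Frobenius norm $\bigl(\sum_i\normil{\vb^i}^2\bigr)^{1/2}$ of $\hat{\Mb}^{-1}$ by the same constant, which is slightly stronger than the spectral-norm statement.
\end{itemize}

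One small correction to your closing remark. In the converse, enlarging the region to a trust region containing $\Ball(\yb^0,\Delta(Y))$ only strengthens the hypothesis, so it does no harm there. The restriction to $\Ball(\yb^0,\Delta(Y))$ matters in the forward direction, where the entrywise bound $\abs{\partial^{\alphab}\phi_j(\hat{\xb})}\le 1$ requires $\hat{\xb}$ to lie in the unit ball.
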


\begin{proof}
    Throughout the proof, we use the fact that $\normil{\hat{\Mb}^{-1}} = \normil{\hat{\Mb}^{-T}}$.
    To prove the forward relation, let $(\xb, \alphab) \in \Ball \times A$ be given.
    From the system $\hat{\Mb}^T \lambdab(\hat{\xb}) = \phib(\hat{\xb})$,
    \begin{align*}
        \lambda_i(\hat{\xb})
            = \sum_{j=0}^{q} (\hat{\Mb}^{-T})_{ij} \phi_j(\hat{\xb}) 
        &\implies \partial^{\alphab} \lambda_i(\hat{\xb})
            = \sum_{j=0}^{q} (\hat{\Mb}^{-T})_{ij}
            \partial^{\alphab} \phi_j(\hat{\xb}) \\
        &\implies |\partial^{\alphab} \lambda_i(\hat{\xb})|
            \le \sum_{j=0}^{q} |(\hat{\Mb}^{-T})_{ij}|
            |\partial^{\alphab} \phi_j(\hat{\xb})| \le \norm{\hat{\Mb}^{-T}}_\infty
    \end{align*}
    since $|\partial^{\alphab} \phi_j(\hat{\xb})| \le 1$ for all $\hat{\xb} \in \Ball(\zerob, 1)$ and all $0 \le j \le q$ and all $\alphab \in A$.
    Since we have $\normil{ \hat{\Mb}^{-T} }_\infty \le \sqrt{q+1} \normil{\hat{\Mb}^{-T}}_2 \le \Lambda$,
    this proves the forward relation.

    To prove the reverse relation, %
    recall the vector space $\cP_n^2$ of quadratic polynomials with total degree at most two.
    For $p \in \cP_n^2$, consider the quantity
    \[
    \norm{p}_A \defined \max_{\alphab \in A} \max_{\xb \in \Ball} |\partial^{\alphab} p(\hat{\xb})|
    .
    \]
    Observe that $\norm{\cdot}_A$ is a norm on $\cP_n^2$.
    To see that $\norm{\cdot}_A$ is a norm, note that
    $\normil{p}_A = 0$ if and only if $p$ is the zero polynomial.
    It is apparent that $\norm{b p}_A = |b| \norm{p}_A$ for $b \in \Reals$.
    Finally, $\norm{\cdot}_A$ satisfies the triangle inequality because
    \begin{align*}
        \norm{p_1 + p_2}_A
        &= \max_{\alphab \in A} \max_{\xb \in \Ball} |\partial^{\alphab} p_1(\hat{\xb}) + \partial^{\alphab} p_2(\hat{\xb})| \\
        &\le \max_{\alphab \in A} \max_{\xb \in \Ball} \left( |\partial^{\alphab} p_1(\hat{\xb})|
            + |\partial^{\alphab} p_2(\hat{\xb})| \right) \\
        &\le \max_{\alphab \in A} \max_{\xb \in \Ball} |\partial^{\alphab} p_1(\hat{\xb})|
            + \max_{\betab \in A} \max_{\yb \in \Ball} |\partial^{\betab} p_2(\hat{\yb})| \\
        &= \norm{p_1}_A + \norm{p_2}_A
        .
    \end{align*}

    We now define a second norm for $p \in \cP_n^2$,
    \[
    \norm{p}_{\text{coeff}} \defined \norm{\vb} \text{ for the unique $\vb$ such that } p(\hat{\xb}) = \vb^T \phib(\hat{\xb}).
    \]
    As a second norm on the finite-dimensional space $\cP_n^2$, $\normil{\cdot}_{\text{coeff}}$ is equivalent to $\normil{\cdot}_A$; that is, there exists a constant $C' > 0$ for which
    $\norm{p}_{\text{coeff}} \le C' \norm{p}_A$
    for all $p \in \cP_n^2$.
    Specifically, we can take
    $C' = \sup_p \{ \norm{p}_{\text{coeff}} :\norm{p}_A = 1 \}$
    .
    When $\norm{p}_A = 1$, \Cref{assn:A_D} guarantees $\zerob \in A$.
    Together with \cref{lem:CSV3.10}, this implies
    \[
    \norm{\vb}_{\infty} \sigma_{\infty}
    \le \max_{\xb \in \Ball} \abs{\vb^T \phib(\hat{\xb})}
    = \max_{\xb \in \Ball} \abs{p(\hat{\xb})}
    \le \norm{p}_A
    \le 1.
    \]
    It follows that $\norm{\vb} \le \sqrt{q+1}/\sigma_{\infty}$, and so we may use
    $C' = \sqrt{q+1}/\sigma_{\infty}$.

    Let $\vb^i$ be the representation of $\lambda_i$ in the natural basis, $\lambda_i(\hat{\xb}) = (\vb^i)^T \phib(\hat{\xb})$.
    Then
    \[
    \hat{\Mb}^{-1} = [ \vb^0, \dots, \vb^{q}]
    \]
    by the orthonormality of $\{\lambda_i\}$,
    and so
    \begin{align*}
        &\norm{\hat{\Mb}^{-1}} = \sup_{\norm{\zb}=1} \norm{z_0 \vb^0 + \dots + z_{q} \vb^{q}}
        \le \sup_{\norm{\zb}=1} \sum_{i=0}^{q} |z_i| \norm{\vb^i} \\
        &\le \sup_{\norm{\zb}=1} \norm{\zb} \sqrt{\sum_{i=0}^{q} \norm{\vb^i}^2}
        = \sqrt{\sum_{i=0}^{q} \norm{\lambda_i}_{\text{coef}}^2}
        \le \frac{\sqrt{q + 1}}{\sigma_{\infty}} \sqrt{ \sum_{i=0}^{q} \norm{\lambda_i}_A^2 }
        \le \left( \frac{q + 1}{\sigma_{\infty}} \right) \Lambda.
    \end{align*}
    Taking $C(n, \phi) = (q + 1) / \sigma_{\infty}$, we have shown the desired result.
\end{proof}

From this, we immediately have the following corollary, which bounds the volume associated with the matrix $\hat{\Mb}$ uniformly away from zero.
\begin{corollary} \label{cor:volume_lower_bound}
    Let \Cref{assn:A_D} hold.
    If $D$ is $\Lambda$-poised in $\Ball(\yb^0, \Delta(Y))$ with respect to $A$, then $|\det{\hat{\Mb}(\phi, D)}|$ is bounded below, with
    \[
    \abs{\det(\hat{\Mb}(\phi, D))} \ge \Theta(n, \phi, \Lambda) > 0.
    \]
\end{corollary}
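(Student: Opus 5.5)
The plan is to combine the reverse direction of \Cref{thm:Lambda_iff_invnorm} with an elementary upper bound on the singular values of $\hat{\Mb}$. By \Cref{thm:Lambda_iff_invnorm}, $\Lambda$-poisedness in $\Ball(\yb^0,\Delta(Y))$ with respect to $A$ gives $\normil{\hat{\Mb}^{-1}} \le C\Lambda$ with $C = (q+1)/\sigma_\infty$. Since $\normil{\hat{\Mb}^{-1}}_2 = 1/\sigma_{\min}(\hat{\Mb})$, this means every singular value of $\hat{\Mb}$ is at least $1/(C\Lambda)$. The determinant is the product of the singular values in absolute value, so
\[
\abs{\det \hat{\Mb}} = \prod_{k=0}^{q} \sigma_k(\hat{\Mb}) \ge \left(\frac{1}{C\Lambda}\right)^{q+1} = \left(\frac{\sigma_\infty}{(q+1)\Lambda}\right)^{q+1}.
\]
This already gives a positive lower bound depending only on $n$ (through $q$), $\phi$ (through $\sigma_\infty$) and $\Lambda$.

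The only point needing care is that this argument uses the lower bound on the smallest singular value alone. It does not actually need an upper bound on the largest one, because every factor in the product is bounded below by the same quantity. I will still record, as a sanity check, that the entries $\hat M_{ij} = \partial^{\alphab^i}\phi_j(\hat{\yb}^i)$ satisfy $\abs{\hat M_{ij}}\le 1$. This holds because $\hat{\yb}^i \in \Ball(\zerob,1)$ by \eqref{eq:rescale_Y}, and because the same bound $|\partial^{\alphab}\phi_j|\le 1$ was used in the proof of \Cref{thm:Lambda_iff_invnorm}. Consequently $\normil{\hat{\Mb}} \le q+1$, and in particular $C\Lambda\,(q+1) \ge 1$. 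This shows the bound is consistent with $\Lambda$ being large. An alternative route is $\abs{\det \hat{\Mb}} = 1/\abs{\det \hat{\Mb}^{-1}}$ combined with Hadamard's inequality $\abs{\det \hat{\Mb}^{-1}} \le \normil{\hat{\Mb}^{-1}}^{q+1}$; it gives the same constant.

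I expect no step to be a real obstacle. The main thing to verify is that the hypothesis here matches the hypothesis of \Cref{thm:Lambda_iff_invnorm}: the region is $\Ball(\yb^0,\Delta(Y))$, and $\zerob\in A$ holds by \Cref{assn:A_D}. I will also note that $\Lambda$-poisedness implicitly presumes that $D$ is poised, so $\hat{\Mb}$ is invertible and the singular-value identity applies. I will then set $\Theta(n,\phi,\Lambda) \defined (\sigma_\infty/((q+1)\Lambda))^{q+1}$.
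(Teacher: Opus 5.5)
Your proposal is correct and follows the paper's proof: take $\normil{\hat{\Mb}^{-1}} \le C\Lambda$ from the converse direction of \Cref{thm:Lambda_iff_invnorm}, then use that $\abs{\det \hat{\Mb}}$ is the product of singular values, equivalently $1/\abs{\det \hat{\Mb}^{-1}}$, to get $\abs{\det \hat{\Mb}} \ge (C\Lambda)^{-(q+1)}$. The entrywise bound on $\hat{\Mb}$ is a harmless consistency check and is not needed for the result.
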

\begin{proof}
    The previous theorem guarantees a constant $C(n, \phi)$ such that if $D$ is $\Lambda$-poised in $\Ball(\yb, \Delta(Y))$ with respect to $A$, then $\normil{\hat{\Mb}^{-1}} \le C(n, \phi) \Lambda$.
    Since the absolute value of a matrix determinant is the product of its singular values, we obtain
    \[
    \abs{\det(\hat{\Mb})}
    = \frac{1}{|\det(\hat{\Mb}^{-1})|}
    \ge \frac{1}{ (C(n, \phi) \Lambda)^{q+1} }
    .
    \]
\end{proof}

\subsection{Pivoting Algorithms for Improving Poisedness}

In this section we describe subroutines for generating poised interpolation data and improving the poisedness constant of existing data.
These subroutines are greedy and prioritize computational efficiency.
Nonetheless, we will be able to demonstrate theoretical guarantees concerning these subroutines, which will be necessary for proving convergence results for our model-based DFO algorithm.

\Cref{alg:model_completion} describes a procedure for generating poised Birkhoff interpolation data from non-poised data.
In the context of our model-based DFO algorithm, at the outset of an iteration there may not exist a set of poised data within a reasonable distance of the current trust region, necessitating \Cref{alg:model_completion}.
As a special extreme case, \Cref{alg:model_completion} can be employed to generate an initial set of interpolation data given a single $(\yb, \alphab)$ pair.

\Cref{alg:model_completion} does not compute the Birkhoff interpolation polynomials explicitly.
Instead, we develop a basis of \emph{pivot polynomials} $\{u_i \mid i=0, \dots, q\}$ that span the space $\cP_n^2$.
These pivot polynomials do not satisfy the mutual orthonormality of Birkhoff interpolation polynomials, but they do satisfy $\partial^{\alphab^i} u_j(\hat{\yb}^i) = \delta_{ij}$ for $i \le j$.
This relaxation reduces the computational load in each iteration of \Cref{alg:model_completion}, but it  introduces a greedy characteristic; pivot polynomials constructed in early iterations of \Cref{alg:model_completion} are never re-examined in later iterations.

To clarify notation,
in \Cref{alg:model_completion} we continue to use $(\yb,\alphab)$ to denote interpolation conditions in the input data $D$, but we will introduce $(\zb,\betab)$ to denote interpolation conditions in a secondary data set $D'$, which will be returned upon termination.
We write $p+1$ to denote the cardinality of $D$, which may or may not equal $q+1$, the number of conditions in the poised set $D'$. If $p<q$, conditions will be added until the result is poised; if $p>q$, conditions will be removed.

\begin{algorithm2e}
    \SetAlgoNlRelativeSize{-4}
\caption{Birkhoff model completion with pivot polynomials}
\label{alg:model_completion}
\KwIn{An initial polynomial basis $\{ u_i \mid i=0,1, \dots,q \}$ (e.g., the natural basis functions $\phi$).
Model center $\yb^0$.
Interpolation data $D = \{ (\yb^i, \alphab^i) \mid i=0, \dots, p \}$
with $(\yb^0, \zerob) \in D$.
The trust-region radius $\Delta$.
An acceptance threshold $\xi_{acc} > 0$.\\ %
}

\KwOut{Data $D'$ numerically poised at least to threshold $\xi_{acc}$ or a certificate that $\xi_{acc}$ is too high. Final pivot polynomial $u_q$ for which $\partial^{\alphab^i} u_q(\hat{\yb^i}) = 0$ for $i=0, \dots, q-1$.}

Initialize $D' \gets \{ (\zb^0 := \yb^0, \betab^0 := \zerob) \}$
and $D \gets D \setminus \{ (\yb^0, \zerob) \}$.\\

$D \gets D \setminus \left\{ (\yb^i, \alphab^i) \in D : \norm{\yb^i - \yb^0} > \Delta  \right\}$.

\For{$i=1, \dots, q$}{

    $\displaystyle
    (\zb^i, \betab^i) \gets \argmax_{(\yb, \alphab) \in D} \abs{ \partial^{\alphab} u_i(\hat{\yb}) }
    $
    with the rescaling for $\hat{\yb}$ defined in \eqref{eq:rescale_Y}.\\
    $\displaystyle M_{i, D} \gets \abs{ \partial^{\betab^i} u_i(\hat{\zb}^i) }$.\\
    \uIf{$M_{i, D} > \xi_{acc}$}{

        $D \gets D \setminus \{(\zb^i, \betab^i) \}$.

    }
    \Else{

        $\displaystyle
        (\zb^i, \betab^i) \gets \argmax_{(\yb, \alphab) \in \Ball(\zerob, \Delta) \times A}
        \abs{\partial^{\alphab} u_i(\hat{\yb})}
        $.\\

        $\displaystyle
        M_{i, \Delta} \gets \abs{ \partial^{\betab^i} u_i(\hat{\zb}^i) }
        $.\\

        \If{$M_{i, \Delta} < \xi_{acc}$}{
            \Return \textbf{failure}
        }

    }

    $\displaystyle D' \gets D' \cup \{ (\zb^i, \betab^i) \}$.\\

    \For{$j = i + 1, \dots, q$}{
        $\displaystyle
        u_j(\hat{\xb}) \gets u_j(\hat{\xb}) - \frac{\partial^{\betab^i} u_j(\hat{\zb}^i)}{\partial^{\betab^i} u_i(\hat{\zb}^i)} u_i(\hat{\xb}).
        $ \label{line:gauss_elim}
    }
}
\end{algorithm2e}

A key challenge we face, and as witnessed in \Cref{fig:heatmap_poisedness}, is that derivative information may improve interpolation quality unevenly across candidate conditions. Since including all available derivative information may unnecessarily increase interpolation-system size and ill-conditioning, we seek a strategy that selectively incorporates the most useful derivative constraints. The greedy procedure in \Cref{alg:model_completion} attempts to balance interpolation quality with model complexity.

In \Cref{alg:model_completion} we explicitly initialize the poised interpolation set with the condition $m(\yb^0) = f(\yb^0)$.
Practically, the satisfaction of this constraint is guaranteed by choosing $(\yb^0, \zerob)$ as the first pivot on the initial (natural basis) pivot polynomial $u_0(\xb) = 1$.
If we do not make this initial choice of pivot, the condition $(\yb^0, \zerob)$ may not pivot into $D'$ in later iterations.
Although there is nothing incorrect with omitting the condition $m(\yb^0) = f(\yb^0)$ from the Birkhoff interpolation model, we find that its inclusion makes models more intuitive and interpretable.

We also note the role of scaling and weighting in \Cref{alg:model_completion}.
In every iteration, the normalizing factor that transforms $\yb$ into $\hat{\yb}$ and $\zb$ into $\hat{\zb}$ is fixed to be the maximal radius $\Delta(Y)$ associated with the inputted data $D$.
If we recalculated the scale factors for $D$ and $D'$ in each iteration, then comparing pivot values to the constant threshold $\xi_{acc}$ would also change and cause ambiguity about the quality of the returned data.
On the other hand, there is a different form of rescaling that maintains consistency throughout the algorithm.
One can introduce a weighting function $w(\yb, \alphab)$ and replace all derivatives $\partial^{\alphab} u_i(\hat{\yb})$ by a weighted variant $\partial^{\alphab} u_i(\hat{\yb}) / w(\yb, \alphab)$.
This can be used, for example, to prioritize selecting interpolation nodes that lie within the current trust region.
As long as $w(\yb, \alphab)$ is positive and uniformly bounded above on the entire evaluation domain $\cL'(\xb^{(0)}) \times A$, \Cref{alg:model_completion} will still produce a poised interpolation set, and the following theorem guaranteeing convergence still applies.

\begin{theorem}\label{thm:alg1terminates}
    Let \Cref{assn:A_D} hold, and let interpolation conditions $D$ be given, with $(\yb^0, \zerob) \in D$.
    \Cref{alg:model_completion} will successfully terminate with poised Birkhoff interpolation data as long as $\xi_{acc}$ is sufficiently small.
\end{theorem}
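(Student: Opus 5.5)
The plan is to show that at every iteration $i$ of \Cref{alg:model_completion}, the current pivot polynomial $u_i$ is a nonzero polynomial. If so, the maximization over $\Ball(\zerob,\Delta)\times A$ returns a strictly positive value $M_{i,\Delta}$, and choosing $\xi_{acc}$ below a uniform lower bound for these maxima means the failure branch is never taken. After the $q$ iterations, the returned set $D'$ then has $q+1$ conditions, and the matrix $\hat{\Mb}(\phi,D')$ is shown to be invertible by a triangular-factorization argument.

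The first step is an invariant for the pivot polynomials. I would prove by induction on $i$ that $\{u_0,\dots,u_q\}$ remains a basis of $\cP_n^2$. Two properties follow from this. First, $\partial^{\betab^k} u_j(\hat{\zb}^k) = 0$ for all $k<j$ after the updates in \cref{line:gauss_elim}. Second, $\partial^{\betab^k}u_k(\hat{\zb}^k)\neq 0$. The basis property holds because each update subtracts a multiple of $u_i$ from $u_j$ with $j>i$, which is a unit lower-triangular change of basis. The vanishing property is exactly Gaussian elimination. With the normalization $c_k \defined \partial^{\betab^k}u_k(\hat\zb^k)$, the matrix $[\partial^{\betab^k}u_j(\hat\zb^k)]_{k,j}$ is upper triangular with diagonal entries $c_k$. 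It equals $\hat{\Mb}(\phi,D')\,\Cb$, where $\Cb$ is the invertible coefficient matrix of the $u_j$ in the natural basis. Hence $\det\hat{\Mb}(\phi,D') = \prod_k c_k/\det \Cb \neq 0$, which gives poisedness. The diagonal entries satisfy $|c_0|=1$ and, for $k\ge1$, $|c_k|\ge \xi_{acc}>0$ on either branch.

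The main obstacle is showing that the fallback maximum $M_{i,\Delta}$ is bounded below by a positive constant independent of the data, so that a single sufficiently small $\xi_{acc}$ works. The plan is to use that $u_i\neq 0$, since it is a basis element, together with $\zerob\in A$ from \Cref{assn:A_D}. Then $M_{i,\Delta}\ge\max_{\hat\xb\in\Ball(\zerob,1)}|u_i(\hat\xb)|$, and by \Cref{lem:CSV3.10} this is at least $\sigma_\infty\|\vb^i\|_\infty$, where $\vb^i$ is the coefficient vector of $u_i$. One needs to be careful that the maximization is over unscaled $\yb\in\Ball(\zerob,\Delta)$ but evaluates at $\hat\yb$; since $\Delta(Y)\le\Delta$, the image contains the unit ball after rescaling.

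To bound $\|\vb^i\|_\infty$ below, I would note that the elimination never subtracts from the natural-basis coefficient of index $i$ in a way that cancels it, given that the earlier pivots avoid it. That is the delicate point. A simpler route is to accept a data-dependent threshold: the statement only requires \emph{some} sufficiently small $\xi_{acc}$ for the given $D$. One can then argue by induction that for $\xi_{acc}$ less than $\min_i \sigma_\infty\|\vb^i\|_\infty$ along the sequence of choices made, no failure occurs. Since the choices depend on $\xi_{acc}$ only through the finitely many branch decisions, one takes the minimum over the finitely many possible execution paths, noting that $D$ is finite and the fallback argmax can be fixed by a deterministic tie-breaking rule. This yields termination with poised $D'$.
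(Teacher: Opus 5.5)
Your proof is correct, and its core matches the paper's. The pivot polynomial $u_i$ at iteration $i$ equals $u_i^{(0)}$ plus a combination of $u_0^{(0)},\dots,u_{i-1}^{(0)}$, so it is nonzero. Since $\zerob\in A$, this forces $M_{i,\Delta}>0$, and a small enough $\xi_{acc}$ avoids failure. Your proposal differs from the paper in two places.

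\textbf{Poisedness of the output.} You prove it explicitly via the factorization $\hat{\Mb}(\phi,D')\Cb=[\partial^{\betab^k}u_j(\hat{\zb}^k)]_{k,j}$, a triangular matrix with nonzero diagonal. The paper stops at ``no failure occurs'' and leaves poisedness implicit, so this is a genuine addition. Two small fixes:
\begin{itemize}
\item With your indexing the matrix is lower, not upper, triangular; this does not affect the determinant.
\item The \texttt{for} loop starts at $i=1$, so no elimination is ever done against $(\zb^0,\zerob)$. The off-diagonal zeros in row $k=0$ come instead from using the natural basis: every $u_j$ with $j\ge 1$ vanishes at $\hat{\zb}^0=\zerob$ throughout. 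For a general input basis you would need an explicit step-$0$ elimination.
\end{itemize}

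\textbf{Independence of $\xi_{acc}$ from the branch decisions.} You enumerate finitely many execution paths under a tie-breaking rule. That is valid but more than needed. The paper implicitly uses a single path: the one that takes the first branch exactly when $M_{i,D}\neq 0$. This path does not depend on $\xi_{acc}$, and if $\xi_{acc}<\min_i M_i$ along it, the actual run coincides with it.

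More importantly, the step you call delicate is immediate. By induction, $u_k\in\mathrm{span}\{\phi_0,\dots,\phi_k\}$. Subtracting multiples of $u_k$ with $k<i$ therefore never changes the $\phi_i$-coefficient of $u_i$, which stays exactly $1$; this is the paper's $a_{ii}=1$. Hence $\norm{\vb^i}_\infty\ge 1$. By \Cref{lem:CSV3.10}, $M_{i,\Delta}\ge\sigma_\infty=1/4$ whenever the rescaled search region contains $\Ball(\zerob,1)$, i.e., under your assumption $\Delta(Y)\le\Delta$. So any $\xi_{acc}\le\sigma_\infty$ works uniformly in $D$, with no path enumeration and no tie-breaking rule. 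That data-independent threshold is stronger than both your route and the paper's.
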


\begin{proof}
    Let $\{ u_j^{(i)} \mid j=0, \dots, q \}$ denote the polynomial basis spanning $\cP_2^n$ at the beginning of the $i$th iteration of \Cref{alg:model_completion}, where $\{ u_j^{(0)} \}$ denotes the input basis.
    None of $u_0^{(0)}, \dots, u_q^{(0)}$ can be the zero polynomial, since they form a basis for the polynomial space.
    At the outset of the $i$th iteration, the polynomial $u_i^{(i)}$ is a linear combination of $u_0^{(0)}, \dots, u_i^{(0)}$, due to \lineofalg{line:gauss_elim}{alg:model_completion}.
    Notice that, because of  \lineofalg{line:gauss_elim}{alg:model_completion}, no pivot polynomial $u_j^{(i)}$ can ever be the zero polynomial, since
    $a_{ii}=1$ in
    \[
    u_i^{(i)}(\hat{\xb}) = \sum_{j=0}^i a_{ji} u_j^{(0)}(\hat{\xb}).
    \]
    Thus, for any $\Delta > 0$ there always exists some $\yb \in \Ball(\zerob, \Delta)$ such that $\abs{u_i(\hat{\yb})} > 0$.

    Next, consider the quantities
    $M_{i, D}$ and $M_{i, \Delta}$, which appear in the $i$th iteration of \Cref{alg:model_completion}, and define
    \begin{align*}
        M_i &\defined
            \begin{dcases}
                M_{i, D} & M_{i, D} \ne 0, \\
                M_{i, \Delta} & M_{i, D} = 0.
            \end{dcases}
    \end{align*}
    If $M_{i, D} = 0$, then
    $M_{i, \Delta} > 0$ by our previous observation that $u_i^{(i)}$ is not the zero polynomial.
    Since $\zerob \in A$ by \Cref{assn:A_D}, this guarantees $M_i > 0$ at each iteration.
    Thus, \Cref{alg:model_completion} will terminate with success provided $\xi_{acc} < \min_i \{ M_i \}$.
\end{proof}

\Cref{alg:model_completion} (and \Cref{alg:model_improvement} and \Cref{alg:optimizer} to follow)  assumes knowledge only of the available derivative set $A$.
Critically, one does not need to know {\em a priori} which derivatives should be utilized to build a Birkhoff model.
In fact, fixing an a priori ordering on multiindices $\alphab$ drawn from the availability set $A$ could raise potential issues.
Consider a modification of \Cref{alg:model_completion} in two dimensions given input $\bar{D} = \{ (\yb^0 = \zerob, \zerob) \}$ with the natural quadratic basis as the initial pivot polynomials.
If one insists on building a model with three conditions with $\alphab = \zerob$ and three conditions with $\alphab = [1,0]$, then the success of \Cref{alg:model_completion} depends on the orderings of both the pivot basis functions and the derivative multi-indices.
For example, consider providing input $D = \{ (\yb^0, \zerob) \}$ to \Cref{alg:model_completion}.
In the first pass through the for loop of \Cref{alg:model_completion},
we deal with the pivot polynomial $u_1(\xb) = \xb_1$.
Suppose we had fixed the multi-index $\alphab = [1,0]$ in this first pass.
Since $\partial_{\xb_1}u_1(\xb) = 1$, we may arbitrarily choose $\yb^1 = \yb^0 = \zerob$.
After the corresponding orthogonalization in this first pass, the selection of
neither $(\yb^0,\alphab)$ nor $(\yb^1,\alphab)$ will affect $u_2(\xb) = \xb_2$, since $u_2(\zerob) = \partial_{\xb_1} u_2(\zerob) = 0$.
Then, on the second pass through the for loop, if we  again fixed  $\alphab = [1,0]$, the subproblem in Line 4 amounts to maximizing the zero polynomial, which will clearly result in failure of \Cref{alg:model_completion}.
This example illustrates how differentiation can nullify nontrivial polynomials; this motivates our flexibility to adjust which $\alphab \in A$ is paired with each pivot $u_i$ in each pass through the for loop of \Cref{alg:model_completion}.

We have illustrated that one can always generate poised Birkhoff interpolation data corresponding to a well-defined quadratic polynomial interpolant.
However, if the conditions $D$ provided as input to \Cref{alg:model_completion} exhibit poor geometry in the sense that $\Lambda(D)$ is large, and provided the acceptance threshold $\xi_{acc}$ is sufficiently small, then it is possible that \Cref{alg:model_completion} returns $D' \subseteq D$, which is also poorly poised.
To mitigate such situations, we present \Cref{alg:model_improvement}, which improves the geometry of given data $D$ by changing exactly one of the Birkhoff interpolation conditions.
\Cref{alg:model_improvement} is intended to follow \Cref{alg:model_completion}.
\Cref{alg:model_improvement} searches over the product space of the trust region and the available set for a condition that improves the last pivot polynomial.
Together, \Cref{alg:model_completion} and \Cref{alg:model_improvement} provide a framework for iteratively updating Birkhoff interpolation models.

\begin{algorithm2e}
     \SetAlgoNlRelativeSize{-3}
\caption{Birkhoff model improvement with pivot polynomials}
\label{alg:model_improvement}

\KwIn{%
Model center $\yb^0$.
Interpolation data $D = \{ (\yb^i, \alphab^i) \mid i=0, \dots, p \}$
with $(\yb^0, \zerob) \in D$.
The trust-region radius $\Delta$.
An acceptance threshold $\xi_{acc} > 0$ and improvement factor threshold $\xi_{imp} > 1$.
}
\KwOut{Data $D'$ numerically poised up to threshold $\xi_{imp} \xi_{acc}$ or certificate that such improvement is impossible with one condition switch.}

Run \Cref{alg:model_completion} on $D$ to produce $D' = \{ (\zb^i, \betab^i) \mid i=0, \dots, q \}$ and final pivot polynomial $u_q$.

$ \displaystyle
(\zb^\star, \betab^\star) \gets \argmax_{(\yb,\alphab) \in \Ball(\yb^0, \Delta) \times A}
\abs{ \partial^{\alphab} u_q(\hat{\yb}) }$

\uIf{$\abs{\partial^{\alphab^\star} u_q(\hat{\zb}^\star)} > \xi_{imp} \abs{\partial^{\alphab^q} u_q(\hat{\zb}^q)}$}{
    $D' \gets D' \setminus \{ (\zb^q, \alphab^q) \} \cup \{ (\zb^\star, \alphab^\star) \}$
}
\Else{
\Return \textbf{failure}
}

\end{algorithm2e}

\subsection{Algorithm Statement}

We now employ \Cref{alg:model_completion} and \Cref{alg:model_improvement} within an optimization algorithm capable of using available derivative information.
This is presented in \Cref{alg:optimizer}, a variant of the optimization method presented by Conn, Scheinberg, and Vicente~\cite[Algorithm 10.3]{CSV2009}.
In \Cref{sec:converge} we extend established convergence results for this framework to demonstrate convergence of our method.

At a high level, our approach alternates between constructing local Birkhoff interpolation models and computing trust-region trial steps using those models. The interpolation set and derivative conditions are updated dynamically to maintain model quality while exploiting whatever derivative information is available.

\begin{algorithm2e}[h!]
    \SetAlgoNlRelativeSize{-4}
\caption{Trust-region method with some available derivatives}
\label{alg:optimizer}
\KwIn{An initial point $\xb^{(0)}$ and trust-region radius $\Delta^{(0)} > 0$. Initial data $\bar{D}$.
A maximum radius $\Delta_{max} > \Delta^{(0)}$.
Specify the constants $\eta$, $\gamma_{dec}$, $\gamma_{inc}$, $\epsilon_c$, $\beta$, and $\mu$
which satisfy $0 \le \eta < 1$, $0 < \gamma_{dec} < 1 < \gamma_{inc}$, $\epsilon_c > 0$, and $\mu > \beta > 0$.
}

\For{$k = 0, 1, \dots$}{

    Run \Cref{alg:model_completion} on $\bar{D}$ to produce $D$.

    Build $m^{(k)}(\xb^{(k)} + \sv) = c^{(k)} + (\gb^{(k)})^T \sv + \frac{1}{2} \sv^T \Hb^{(k)} \sv$ using $D$.

    Set $\sigma^{(k)} = \max \{ \norm{\gb^{(k)}}, -\lambda_{min}(\Hb^{(k)}) \}$.\\

    \tcc{Criticality step}
    \eIf{$\sigma^{(k)} > \epsilon_c$ or $\Delta^{(k)} \le \mu \sigma^{(k)}$}{
        Proceed to step calculation.
    }{ %
    Improve $\bar{D}$ with \Cref{alg:model_improvement} to generate $\tilde{D}$ and corresponding new model $\tilde{m}^{(k)}$, trust-region radius $\tilde{\Delta}^{(k)}$, and optimality measure $\tilde{\sigma}^{(k)}$.\\
    Set
    $m^{(k)} \gets \tilde{m}^{(k)}$,
    $\sigma^{(k)} \gets \tilde{\sigma}^{(k)}$, and
    $\Delta^{(k)} \gets \min \{ \max \{ \tilde{\Delta}^{(k)}, \beta \tilde{\sigma}^{(k)} \}, \Delta^{(k)} \}$.
    }

    \tcc{Step calculation}
    Compute a step $\sv^{(k)}$ that minimizes $m^{(k)}$ over $\Ball(\xb^{(k)}, \Delta^{(k)})$.\\

    Compute $f(\xb^{(k)} + \sv^{(k)})$ and
    $\displaystyle
    \rho^{(k)} \gets \frac{f(\xb^{(k)}) - f(\xb^{(k)} + \sv^{(k)})}{m^{(k)}(\xb^{(k)}) - m^{(k)}(\xb^{(k)} + \sv^{(k)})}
    .$\\

    $\bar{D} \gets \bar{D} \cup \{ ( \xb^{(k)} + \sv^{(k)}, \zerob ) \}$. \\

    \tcc{Model update}

    \uIf{$\rho^{(k)} \ge \eta$}{
        Accept the trial point $\xb^{(k+1)} \gets \xb^{(k)} + \sv^{(k)}$.}
    \Else{
        Improve $\bar{D}$ with \Cref{alg:model_improvement} and set $x^{(k+1)} \gets x^{(k)}$.
        }

    \uIf{$\rho^{(k)} \ge \eta$ and $\norm{\sv^{(k)}} = \Delta^{(k)}$}{
        Increase the trust-region radius: $\Delta^{(k+1)} \gets \max( \gamma_{inc} \Delta^{(k)}, \Delta_{max} )$.
    }
    \uElseIf{$\rho^{(k)} < \eta$ and $\norm{\xb - \xb^{(k)}} \le \Delta^{(k)}$ for all $(\xb, \alphab) \in D$}{
         Decrease the trust-region radius by $\Delta^{(k+1)} \gets \gamma_{dec} \Delta^{(k)}$.
         }
     \Else{$\Delta^{(k+1)} \gets \Delta^{(k)}$}
    }
\end{algorithm2e}

We determine interpolation data $D$ in each iteration with \Crefrange{alg:model_completion}{alg:model_improvement}.
The initial model for \Cref{alg:optimizer} can be obtained by performing \Cref{alg:model_completion}
with $D= \{ (\yb^0,\zerob) \}$
and solving the linear system \eqref{eq:m_interp_linsys_normalized} to obtain model coefficients.

\section{Convergence Analysis} \label{sec:converge}
Our analysis follows the standard trust-region paradigm used in model-based derivative-free optimization. The main contribution is demonstrating that the proposed Birkhoff interpolation models satisfy the fully linear or fully quadratic accuracy conditions required by existing global convergence theory.

In this section we argue that \Cref{alg:optimizer} converges to a local minimizer of the objective function.
For this, we defer to existing analysis that a trust-region algorithm using sufficiently accurate local models yields this convergence property~\cite{CSV2009}.
Therefore, we show here that Birkhoff interpolation models provide sufficiently accurate local approximations in the sense that they are fully quadratic, defined as follows.

\begin{definition} \label{def:models}
    A model $m$ is a fully quadratic approximation of a function $f$ on $\Ball(\yb^0, \Delta)$ if there exist constants $\kappa_{eh}$, $\kappa_{eg}$, $\kappa_{ef}$, and $L_{\Hess m}$ for which $\Hess m$ is Lipschitz continuous with a Lipschitz constant bounded by $L_{\Hess m}$, and such that the errors between the Hessians, gradients, and values of $m$ and $f$ satisfy
    \begin{align*}
        \norm{\Hess m(\yb) - \Hess f(\yb)} \le \kappa_{eh} \Delta,\\
        \norm{\grad m(\yb) - \grad f(\yb)} \le \kappa_{eg} \Delta^2,\\
        | m(\yb) - f(\yb) | \le \kappa_{ef} \Delta^3,
    \end{align*}
    respectively, for all $\yb \in \Ball(\yb^0, \Delta)$.
\end{definition}

First, the choice to take a quadratic polynomial $m$ for the Birkhoff interpolant automatically satisfies the Lipschitz Hessian condition with $L_{\Hess m} = 0$.
To facilitate the convergence discussion for the remaining bounds, we introduce notation for tracking the number of interpolation conditions in $D=\{(\yb^i, \alphab^i)\}$ corresponding to each total derivative order $|\alphab^i|$.
Let $n_{\ell}$ be the number of interpolation conditions that enforce a derivative of order $\ell$. That is,
\[
n_{\ell} \defined |\{ \alphab^i \mid 0 \le i \le q \text{ and } |\alphab^i| = \ell \}|.
\]
With poised data for a quadratic interpolant, we have $n_0 + n_1 + n_2 = q + 1$.
We will also write the interpolant $m$ in the form introduced in \eqref{eq:m_expression},
\begin{equation*} %
    m(\yb) = c + \gb^T (\yb - \yb^0) + \frac{1}{2} (\yb - \yb^0)^T \Hb (\yb - \yb^0).
\end{equation*}

We first state a standard proposition, without proof, derived from Taylor's theorem and Lipschitz continuity.

\begin{proposition}\label{prop:taylor}
    Let \Cref{assn:poised_D2lipshitz} hold.
    There exist functions $c_f(\yb,\zb): \cL(\xb^{(0)})\times\cL^{'}(\xb^{(0)})\to\Reals$, $\cb_g(\yb, \zb):  \cL(\xb^{(0)})\times\cL^{'}(\xb^{(0)})\to\Reals^n$ and $\Cb_H(\yb,\zb): \cL(\xb^{(0)})\times\cL^{'}(\xb^{(0)})\to\Reals^{n\times n}$
    such that
    \begin{enumerate}
        \item $f(\zb) - f(\yb) - \grad f(\yb)^\top (\zb-\yb) - \frac{1}{2} (\zb-\yb)^\top \Hess f(\yb) (\zb-\yb) = c_f(\yb,\zb)\|\zb-\yb\|^3$,
        \item $\grad f(\zb) - \grad f(\yb) - \Hess f(\yb)^\top (\zb-\yb) = \cb_g(\yb,\zb)\|\zb-\yb\|^2$, and
        \item $\Hess f(\zb) - \Hess f(\yb) = \Cb_H(\yb,\zb)\|\zb-\yb\|$
    \end{enumerate}
    such that
    $|c_f(\yb,\zb)| \leq \frac{L_{\Hess f}}{6}$,
    $\|\cb_g(\yb, \zb)\|_{\infty} \leq \frac{L_{\Hess f}}{2}$, and
    $\|\Cb_H(\yb,\zb)\|_{\infty} \leq L_{\Hess f}$,
    each bound holding over the respective function's domain.
\end{proposition}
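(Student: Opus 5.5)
The plan is to derive all three expansions from the integral form of Taylor's theorem along the segment $\yb + t(\zb - \yb)$, $t \in [0,1]$. For each expansion, the remainder will be divided by the appropriate power of $\|\zb - \yb\|$ to define $c_f$, $\cb_g$ and $\Cb_H$. When $\zb = \yb$, each function is set to zero.

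The first obstacle is that the segment must stay inside the region where \Cref{assn:poised_D2lipshitz} gives Lipschitz control. Since $\yb \in \cL(\xb^{(0)})$, I will restrict to $\zb \in \Ball(\yb, \Delta_{max})$, which is the only situation in which the algorithm evaluates $f$. This ball is convex and lies in $\cL'(\xb^{(0)})$ by definition, so the whole segment does too.

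The third item is the simplest. Define $\Cb_H(\yb,\zb) \defined (\Hess f(\zb) - \Hess f(\yb))/\|\zb-\yb\|$. Lipschitz continuity of $\Hess f$ gives $\|\Cb_H\| \le L_{\Hess f}$, and the entrywise max is dominated by the operator 2-norm, so the stated $\infty$ bound follows.

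For the second item, write
\[
\grad f(\zb) - \grad f(\yb) - \Hess f(\yb)(\zb-\yb) = \int_0^1 \left(\Hess f(\yb+t(\zb-\yb)) - \Hess f(\yb)\right)(\zb-\yb)\,dt .
\]
The norm of the integrand is at most $L_{\Hess f}\, t\, \|\zb-\yb\|^2$, and integrating gives $\frac{L_{\Hess f}}{2}\|\zb-\yb\|^2$. Dividing by $\|\zb-\yb\|^2$ defines $\cb_g$, and $\|\cdot\|_\infty \le \|\cdot\|_2$ gives the bound. Symmetry of the Hessian makes the transpose in the statement irrelevant.

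For the first item, use the second-order integral remainder
\[
f(\zb) - f(\yb) - \grad f(\yb)^\top \sv - \tfrac12 \sv^\top \Hess f(\yb)\sv = \int_0^1 (1-t)\, \sv^\top\left(\Hess f(\yb+t\sv) - \Hess f(\yb)\right)\sv\,dt, \qquad \sv = \zb-\yb .
\]
This is bounded by $L_{\Hess f}\|\sv\|^3 \int_0^1 (1-t)\,t\,dt = \frac{L_{\Hess f}}{6}\|\sv\|^3$. Defining $c_f$ as the quotient gives the stated bound.

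I do not expect any of these integral estimates to be hard. The one point needing care is the domain issue above: the Lipschitz hypotheses hold only on $\cL'(\xb^{(0)})$, which need not be convex. If the statement is meant for all of $\cL'(\xb^{(0)})$ rather than for $\zb \in \Ball(\yb, \Delta_{max})$, an extra argument or a convexity assumption would be needed.
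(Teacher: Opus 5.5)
Your proof is correct and is the Taylor-plus-Lipschitz argument the paper alludes to; the paper states this proposition without proof, and your integral-remainder bounds ($L_{\Hess f}/6$, $L_{\Hess f}/2$, $L_{\Hess f}$) are exactly right. Your domain caveat is genuinely needed, not just cautious: take $n=1$, $x^{(0)}=1$, and $f$ equal to $x^2$ near $0$, equal to $(x-D)^2$ near a distant $D$, and larger than $1$ elsewhere; then $\Hess f\equiv 2$ on $\cL'(\xb^{(0)})$, so any $L_{\Hess f}>0$ is admissible, yet the remainder in item~1 from $\yb=0$ to $\zb=D$ is $-D^2$, so $|c_f(0,D)|=1/D$ breaks the bound once $L_{\Hess f}<6/D$. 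The correct hypothesis is therefore that the segment $[\yb,\zb]$ lies in $\cL'(\xb^{(0)})$, which is all your argument uses; this also covers the paper's later use in \Cref{thm:Birkhoff_fully_quadratic}, where both points lie in $\Ball(\yb^0,\Delta_{max})$ but the expansion point need not lie in $\cL(\xb^{(0)})$.
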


We are now ready to show that a quadratic polynomial Birkhoff interpolant is a fully quadratic  approximation to any underlying function that is sufficiently regular and bounded, in the sense of \Cref{assn:poised_D2lipshitz}.
The following result is an analog for the accuracy of Lagrange interpolants~\cite[Theorem 3.16]{CSV2009}.

In practice, the interpolation sets used by \Cref{alg:optimizer} are generated and maintained by repeated calls to \Crefrange{alg:model_completion}{alg:model_improvement}.
These two methods ensure $D$ is poised within the current trust region and contains sufficiently many interpolation conditions to determine a quadratic model.
Under the regularity assumptions of \Crefrange{assn:A_D}{assn:poised_D2lipshitz}, the models constructed throughout \Cref{alg:optimizer} satisfy the hypotheses required by the following theorem whenever \Cref{alg:model_improvement} terminates successfully.

\begin{theorem} \label{thm:Birkhoff_fully_quadratic}
    Let \Crefrange{assn:A_D}{assn:poised_D2lipshitz} hold.
    Let a poised set of interpolation data, $D$, a trust radius $\Delta > 0$, and a point
    $\yb^0 \in \cL(\xb^{(0)})$ %
    be given. Assume $Y$ satisfies $\Delta(Y) \le \Delta \le \Delta_{max}$.
    Then the Birkhoff interpolation model $m$ described by \eqref{eq:m_expression}, and \eqref{eq:m_interp} is fully quadratic on $\Ball(\yb^0, \Delta)$.
    That is, $m$ satisfies \Cref{def:models} with
    \begin{align*}
        \kappa_{eh} &= \sqrt{2} \left( \frac{9}{4} n_{0} + 4 n_{1} + 4 n_{2} \right)^{1/2} L_{\Hess f} \norm{\hat{\Mb}^{-1}},
        \\
        \kappa_{eg} &= (1 + \sqrt{2}) \left( \frac{9}{4} n_{0} + 4 n_{1} + 4 n_{2} \right)^{1/2} L_{\Hess f} \norm{\hat{\Mb}^{-1}},
        \text{ and}
        \\
        \kappa_{ef} &= \left( \frac{4}{3} + 2 \left(1 + 2 \sqrt{2} \right) \left( \frac{9}{4} n_{0} + 4 n_{1} + 4 n_{2} \right)^{1/2} \norm{\hat{\Mb}^{-1}} \right) L_{\Hess f}.
    \end{align*}
\end{theorem}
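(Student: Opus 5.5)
We follow the template of~\cite[Theorem 3.16]{CSV2009}: fix an arbitrary $\yb \in \Ball(\yb^0,\Delta)$, write the model errors at $\yb$ as
\[
\Eb_H = \Hb - \Hess f(\yb), \qquad \eb_g(\yb) = \grad m(\yb) - \grad f(\yb), \qquad e_f(\yb) = m(\yb) - f(\yb),
\]
and derive a square linear system for these errors whose matrix is exactly $\hat{\Mb}$. For each interpolation condition $(\yb^i,\alphab^i)\in D$ we expand both $\partial^{\alphab^i} m(\yb^i)$ and $\partial^{\alphab^i} f(\yb^i)$ about the point $\yb$. For $m$ this expansion is exact, since $m$ is quadratic. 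For $f$ we use \Cref{prop:taylor}: item 1 when $|\alphab^i|=0$, item 2 when $|\alphab^i|=1$, and item 3 when $|\alphab^i|=2$. Subtracting and using the interpolation conditions $\partial^{\alphab^i} m(\yb^i)=\partial^{\alphab^i} f(\yb^i)$ yields, for each $i$,
\[
\partial^{\alphab^i}\phib(\yb^i-\yb)^T \begin{bmatrix} e_f(\yb)\\ \eb_g(\yb)\\ \textvec{\Eb_H}\end{bmatrix} = r_i,
\]
where the residual $r_i$ is built from the $c_f$, $\cb_g$, $\Cb_H$ remainders. Its size is $O(L_{\Hess f}\|\yb^i-\yb\|^{3-|\alphab^i|})$, plus a lower-order correction term of the form $(\yb^i-\yb)^T\Eb_H(\yb^i-\yb)$ style. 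The idea is that the Taylor expansion of $f$ about $\yb$ matches $m$'s expansion except for the Hessian mismatch, which is already an unknown in the system.

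Next we recenter at $\yb^0$ and rescale by $\Delta(Y)$. Since $\yb^i-\yb = (\yb^i-\yb^0)-(\yb-\yb^0)$, we write the system using basis evaluations at $\yb^i-\yb^0$. This converts the unknowns into errors expressed at the model center, or equivalently moves $\yb-\yb^0$ terms to the right-hand side. We then apply the diagonal scalings of \eqref{eq:Mhat}, multiplying row $i$ by $\Delta(Y)^{|\alphab^i|}$ and scaling the unknown blocks by $1,\Delta(Y),\Delta(Y)^2$. This puts the matrix in the form $\hat{\Mb}$ and the scaled unknowns in the form $[e_f,\ \Delta(Y)\eb_g,\ \Delta(Y)^2\textvec{\Eb_H}]$.

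Because $\|\yb^i-\yb\|\le \|\yb^i-\yb^0\|+\|\yb-\yb^0\|\le 2\Delta$, each scaled residual is bounded by a constant times $L_{\Hess f}\Delta^3$. The constants are $(2)^3/6\cdot\ldots$ for function conditions and $4$-type constants for first- and second-order conditions. Summing the squares gives the factor $\left(\frac94 n_0+4n_1+4n_2\right)^{1/2}L_{\Hess f}\Delta^3$. Inverting with $\normil{\hat{\Mb}^{-1}}$ bounds the Hessian error, giving $\Delta(Y)^2\|\Eb_H\|_F\le(\cdots)\Delta^3$. The factor $\sqrt2$ comes from converting the vectorized upper triangle to the Frobenius or spectral norm, since off-diagonal entries appear once.

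The gradient and function bounds would then be obtained in sequence. For the gradient, we bound $\|\eb_g(\yb^0)\|$ from the same solve. We then transport it to arbitrary $\yb$ using $\eb_g(\yb)=\eb_g(\yb^0)+\Eb_H$-type terms plus item 2 of \Cref{prop:taylor}, which produces the $(1+\sqrt2)$ factor. For the function, we integrate similarly: $e_f(\yb)=e_f(\yb^0)+\eb_g(\yb^0)^T(\yb-\yb^0)+\tfrac12(\yb-\yb^0)^T\Eb_H(\yb-\yb^0)-c_f\|\yb-\yb^0\|^3$. This gives the additive $\frac43 L_{\Hess f}$ (from $8/6$) and the $2(1+2\sqrt2)$ coefficient.

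The main obstacle is the mismatch between $\Delta(Y)$ in the scaling and $\Delta$ in the target bounds. Dividing by $\Delta(Y)^2$ would give $\Delta^3/\Delta(Y)^2$, not $\Delta$. To handle this, the expansion point must be chosen so that the residuals scale like $\Delta(Y)^{3-|\alphab|}$. That means Taylor-expanding about $\yb^0$, where $\|\yb^i-\yb^0\|\le\Delta(Y)$, rather than about $\yb$, and only afterwards propagating to $\yb\in\Ball(\yb^0,\Delta)$ with the $\Delta$-dependent Taylor terms. We will attempt this ordering, accepting that the bookkeeping of constants, in particular the precise $9/4$ versus $4$ weights, will need adjustment to match the stated values exactly.
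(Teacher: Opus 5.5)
Your proposal has a genuine gap: the route you finally commit to cannot give the stated constants, and the step that does give them is the one you leave vague.

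\textbf{What the paper does.} It never moves the expansion point away from the arbitrary $\yb\in\Ball(\yb^0,\Delta)$. It takes as unknowns $\tb(\yb)=\tilde{\eb}^g(\yb)-\tilde{\Eb}^H(\yb)(\yb-\yb^0)$ and $\textvec{\tilde{\Eb}^H(\yb)}$. These are the $\yb^0$-centered linear and quadratic coefficients of $m$ minus the second-order Taylor polynomial of $f$ at $\yb$. It eliminates $e^f(\yb)$ by subtracting the Lagrange equation at $\yb^0$ from each other Lagrange equation. The resulting matrix $\Qb$ is independent of $\yb$ and equals $\Mb$ with its first row and column deleted. The right-hand sides remain Taylor remainders about $\yb$, bounded using $\normil{\yb^i-\yb}\le 2\Delta$ and $\normil{\yb^0-\yb}\le\Delta$:
\begin{itemize}
\item Function rows have entries at most $(\frac{8}{6}+\frac{1}{6})L_{\Hess f}\Delta^3=\frac32 L_{\Hess f}\Delta^3$. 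This, not $\frac86$, is where $\frac94$ comes from.
\item First- and second-order rows are bounded by $2L_{\Hess f}\Delta^2$ and $2L_{\Hess f}\Delta$, hence $4n_1+4n_2$.
\end{itemize}
The gradient error is then recovered algebraically, $\tilde{\eb}^g(\yb)=\tb(\yb)+\tilde{\Eb}^H(\yb)(\yb-\yb^0)$, with no further Taylor expansion; this is the source of $1+\sqrt2$. Finally, $e^f(\yb)$ is read off the Case~1 identity, where $\normil{\yb^i-\yb}^3\le 8\Delta^3$ gives $\frac43$ and $2(1+\sqrt2)+2\sqrt2=2(1+2\sqrt2)$.

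Your recentering step does not match this. With unknowns $(e_f(\yb),\eb_g(\yb),\Eb_H)$ the rows are $\partial^{\alphab^i}\phib(\yb^i-\yb)^T$, which depend on $\yb$. The linear change of variables that recenters them at $\yb^0$ produces $\tb(\yb)$ and $\tilde{\Eb}^H(\yb)$, plus a constant term different from $e_f(\yb)$. It does not produce the errors at the model center.

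\textbf{Why your final route fails.} Your last paragraph expands about $\yb^0$ and then transports to $\yb$ with items 2 and 3 of \Cref{prop:taylor}. This does give some fully quadratic bound, but the transport adds $\Cb_H(\yb^0,\yb)\normil{\yb-\yb^0}$ to the Hessian error and $\cb_g(\yb^0,\yb)\normil{\yb-\yb^0}^2$ to the gradient error. These are additive $L_{\Hess f}\Delta$ and $L_{\Hess f}\Delta^2$ terms with no factor $\normil{\hat{\Mb}^{-1}}$. The stated $\kappa_{eh}$ and $\kappa_{eg}$ are pure multiples of $\normil{\hat{\Mb}^{-1}}$. Likewise, your function formula involves only $\normil{\yb-\yb^0}^3\le\Delta^3$, which gives $\frac16$, not $\frac43$. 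So the mismatch is structural, not bookkeeping.

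\textbf{On the obstacle you flag.} The paper avoids it by scaling $\Qb$ with $\Delta$ rather than $\Delta(Y)$. Then every scaled right-hand-side entry is $O(L_{\Hess f}\Delta^3)$, and dividing by $\Delta$ and $\Delta^2$ gives exactly the fully quadratic powers. It then uses that $\hat{\Qb}^{-1}$ is a block of $\hat{\Mb}^{-1}$. Your concern is not empty: that block relation requires both matrices to be scaled by the same radius, i.e.\ the case $\Delta(Y)=\Delta$ as in \cite[Theorem 3.16]{CSV2009}. For $\Delta(Y)<\Delta$, the $\Delta$-scaled inverse can exceed the $\Delta(Y)$-scaled one by a factor growing like $(\Delta/\Delta(Y))^2$. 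But the repair compatible with the stated constants is to keep the $\yb$-centered system in the unknowns $\tb(\yb)$, $\tilde{\Eb}^H(\yb)$, with $\hat{\Mb}$ read as scaled by $\Delta$ (or assuming $\Delta(Y)=\Delta$). Moving the expansion point to $\yb^0$ does not achieve this.
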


\begin{proof}
Let $\yb \in \Ball(\yb^0, \Delta)$ be given.
We explicitly write the modeling errors in the function value, gradient, and Hessian as
\begin{align}
   \label{eq:func_id} m(\yb) &= f(\yb) + e^f(\yb) \\
    \label{eq:grad_id} \grad m(\yb) &= \gb + \Hb (\yb - \yb^0) = \grad f(\yb) + \tilde{\eb}^g(\yb) \\
    \label{eq:hess_id} \Hess m(\yb) &= \Hb = \Hess f(\yb) + \tilde{\Eb}^H(\yb).
\end{align}
We separate the analysis based on the total orders of the multi-indices $\{\alphab^i \mid i = 0, \dots, q\}$ in $D$.
Without loss of generality, permute $i=0,1,\dots,q$ so that the orders $|\alphab^i|$ are nondecreasing, that is, all indices with $|\alphab^i|=0$ appear first in the reordering, followed by those with $|\alphab^i|=1$ and then followed by those with $|\alphab^i|=2$.

\paragraph{\textbf{Case 1:} $\alphab^i=\zerob$}
We have from \eqref{eq:m_interp} that $m(\yb^i) = f(\yb^i)$ and so, by \eqref{eq:func_id}, $e^f(\yb^i)=0$.
Hence
$m(\yb^i) - m(\yb) = f(\yb^i) - (f(\yb) + e^f(\yb))$, and therefore
\begin{align}
    f(\yb^i) - &f(\yb) - e^f(\yb) \nonumber\\
    &= \gb^T (\yb^i - \yb) + \frac{1}{2} (\yb^i - \yb^0)^T \Hb (\yb^i - \yb^0) - \frac{1}{2} (\yb - \yb^0)^T \Hb (\yb - \yb^0) \nonumber\\
    &= \gb^T (\yb^i - \yb) + \frac{1}{2} (\yb^i - \yb)^T \Hb (\yb^i - \yb) + (\yb^i - \yb)^T \Hb (\yb - \yb^0), \label{eq:zero_property}
\end{align}
where the last line comes from some algebraic manipulation.
Using \eqref{eq:grad_id} and \eqref{eq:hess_id}, we have that
\begin{align}
    f(\yb^i) - &f(\yb) - \grad f(\yb)^T (\yb^i - \yb) - \frac{1}{2} (\yb^i - \yb)^T \Hess f(\yb) (\yb^i - \yb) - e^f(\yb) \nonumber \\
    &= \tilde{\eb}^g(\yb)^T (\yb^i - \yb) + \frac{1}{2} (\yb^i - \yb)^T \tilde{\Eb}^H(\yb) (\yb^i - \yb) \nonumber \\
    \label{eq:error_identity} &\implies c_f(\yb^i, \yb)\|\yb^i-\yb\|^3 - e^f(\yb) = (\yb^i - \yb)^T (\tilde{\eb}^g(\yb) + \frac{1}{2} \tilde{\Eb}^H(\yb) (\yb^i - \yb) ),
\end{align}
where $c_f$ is from \Cref{prop:taylor}.
Subtracting \eqref{eq:error_identity} with $\yb^0$ from \eqref{eq:error_identity} with $\yb^i$, we have that
\begin{align*}
    &c_f(\yb^i, \yb)\|\yb^i-\yb\|^3 - c_f(\yb^0, \yb)\|\yb^0-\yb\|^3 \\
    &=  (\yb^i - \yb^0)^T \tilde{\eb}^g(\yb) + \frac{1}{2} (\yb^i - \yb)^T \tilde{\Eb}^H(\yb) (\yb^i - \yb) - \frac{1}{2} (\yb^0 - \yb)^T \tilde{\Eb}^H(\yb) (\yb^0 - \yb).
\end{align*}
Denoting
\begin{equation} \label{eq:def_t}
    \tb(\yb) \defined \tilde{\eb}^g(\yb) - \tilde{\Eb}^H(\yb) (\yb - \yb^0)
\end{equation}
and rearranging, we conclude that, for all $\yb \in \Reals^n$,
\begin{equation} \label{eq:id_deg_0}
    (\yb^i - \yb^0)^T \tb(\yb) + \frac{1}{2} (\yb^i - \yb^0)^T \tilde{\Eb}^H(\yb) (\yb^i - \yb^0)
    = c_f(\yb^i, \yb)\|\yb^i-\yb\|^3-
    c_f(\yb^0, \yb)\|\yb^0-\yb\|^3.
\end{equation}

\paragraph{\textbf{Case 2:} $|\alphab^i|=1$}
Let $k_i$ be the unique index such that $\alphab^i_{k_i} = 1$.
By \eqref{eq:m_interp},
\[
\partial_{x_{k_i}} m(\yb^i)
= (\eb^{k_i})^T (\gb + \Hb (\yb^i - \yb^0))
= \partial_{x_{k_i}} f(\yb^i).
\]
From \eqref{eq:grad_id}, $\partial_{x_{k_i}} m(\yb) = (\eb^{k_i})^T (\grad f(\yb) + \tilde{\eb}^g(\yb))$, which we subtract from both sides to yield
\[
(\eb^{k_i})^T\Hb(\yb^i-\yb)
= (\eb^{k_i})^T [\grad f(\yb^i)-\grad f(\yb)-\tilde{\eb}^g(\yb)].
\]
Using \eqref{eq:hess_id} and \Cref{prop:taylor}, we obtain
\begin{align*}
    & (\eb^{k_i})^T[\nabla^2 f(\yb) + \tilde{\Eb}^H(\yb)](\yb^i-\yb)
    = (\eb^{k_i})^T [\grad f(\yb^i)-\grad f(\yb)-\tilde{\eb}^g(\yb)]\\
    \implies & (\eb^{k_i})^T\tilde{\Eb}^H(\yb)(\yb^i-\yb)
    = (\eb^{k_i})^T [\grad f(\yb^i)-\grad f(\yb)-\nabla^2 f(\yb)(\yb^i-\yb)-\tilde{\eb}^g(\yb)]\\
    \implies & (\eb^{k_i})^T\big[\tilde{\eb}^g(\yb)+\tilde{\Eb}^H(\yb)(\yb^i-\yb)\big]=(\eb^{k_i})^T\cb_g(\yb,\yb^i)\|\yb^i-\yb\|^2.
\end{align*}
Writing $\yb^i-\yb=(\yb^i-\yb^0)-(\yb-\yb^0)$ and using $\tb(\yb)$ as in \eqref{eq:def_t}, we get

\begin{equation} \label{eq:id_deg_1}
    (\eb^{k_i})^T \tb(\yb)+(\eb^{k_i})^T \tilde{\Eb}^H(\yb) (\yb^i - \yb^0) = (\eb^{k_i})^T\cb_g(\yb,\yb^i)\|\yb^i-\yb\|^2 
    .
\end{equation}

\paragraph{\textbf{Case 3:} $|\alphab^i|=2$}
Let $k_i$ and $k'_i$ be two indices for which the condition
\[
\partial_{x_{k_i}} \partial_{x_{k'_i}} m(\yb^i)
= (\eb^{k_i})^T \Hb \eb^{k'_i}
= \partial_{x_{k_i}} \partial_{x_{k'_i}} f(\yb^i)
\]
holds.
Using \eqref{eq:hess_id}, we obtain
\[
(\eb^{k_i})^T (\Hess f(\yb^i)-\Hess f(\yb) - \tilde{\Eb}^H(\yb)) \eb^{k'_i}=0.
\]
By \Cref{prop:taylor},
\begin{equation} \label{eq:id_deg_2}
    (\eb^{k_i})^T \tilde{\Eb}^H(\yb) \eb^{k'_i} = (\eb^{k_i})^T \Cb_H(\yb^i,\yb)\eb^{k'_i}\|\yb^i-\yb\|.
\end{equation}

With these three cases complete, we can take
equations \eqref{eq:id_deg_0}, \eqref{eq:id_deg_1}, and \eqref{eq:id_deg_2} to form a linear system to determine the unknown vector $\tb(\yb)$ defined in \eqref{eq:def_t} and symmetric matrix $\tilde{\Eb}^H(\yb)$.
To express them together in matrix form, we introduce the mapping $(\xb, \zb) \mapsto \wb(\xb, \zb)$, which we define as the vector satisfying
\[
\wb(\xb, \zb)^T \textvec{\Bb} \defined \xb^T \Bb \zb
\]
for any arbitrary symmetric matrix $\Bb$ of appropriate size, keeping in mind that $\textvec{\Bb}$ only enumerates the upper triangular entries of $\Bb$. Explicitly, we have $\wb(\xb, \zb) = \textvec{\Wb(\xb, \zb)}$, where $\Wb(\xb, \zb)$ is the matrix with entries
\[
W_{i j}(\xb, \zb) = \begin{cases}
    x_i z_i, \quad &i = j \\
    x_i z_j + x_j z_i, \quad &i < j.
\end{cases}
\]
Thus, the system \eqref{eq:id_deg_0}, \eqref{eq:id_deg_1}, and \eqref{eq:id_deg_2} may be expressed as
\begin{equation} \label{eq:model_error_linsys}
    \begin{bNiceArray}{c|c}
        (\yb^1 - \yb^0)^T & \frac{1}{2} \wb(\yb^{1} - \yb^0, \yb^{1} - \yb^0)^T \\
        \vdots & \vdots \\
        (\yb^{n_{0} - 1} - \yb^0)^T & \frac{1}{2} \wb(\yb^{n_{0} - 1} - \yb^0, \yb^{n_{0} - 1} - \yb^0)^T \\
        \hline
        (\eb^{k_{n_{0}}})^T & \wb(\eb^{k_{n_{0}}}, \yb^{n_{0}} - \yb^0)^T \\
        \vdots & \vdots \\
        (\eb^{k_{n_{0} + n_{1} - 1}})^T & \wb(\eb^{k_{n_{0} + n_{1} - 1}}, \yb^{n_{0} + n_{1} - 1} - \yb^0)^T \\
        \hline
        \zerob^T & \wb(\eb^{k_{n_{0} + n_{1}}}, \eb^{k'_{n_{0} + n_{1}}})^T \\
        \vdots & \vdots \\
        \zerob^T & \wb(\eb^{k_{q}}, \eb^{k'_{q}})^T
    \end{bNiceArray}
    \begin{bNiceArray}{c}
        \tb(\yb) \\
        \textvec{\tilde{\Eb}^H(\yb)}
    \end{bNiceArray}
    =
    \begin{bNiceArray}{c}
        \ab(\yb) \\
        \bb(\yb) \\
        \cb(\yb)
    \end{bNiceArray},
\end{equation}
where $\ab(\yb), \bb(\yb), \cb(\yb)$ contain the right-hand side entries of \eqref{eq:id_deg_0}, \eqref{eq:id_deg_1}, and \eqref{eq:id_deg_2}.

We intend to derive bounds on $\tb(\yb)$ and $\textvec{\tilde{\Eb}^H(\yb)}$ in \eqref{eq:model_error_linsys}.
This is most easily done by first rescaling the system in \eqref{eq:model_error_linsys}.
Note that each entry of  the components of the right-hand side  involves norm terms of the form $\|\yb-\yb^i\|$ or $\|\yb-\yb^0\|$, which can be upper-bounded by $2\Delta$ and $\Delta$, respectively.
Thus, using \Cref{prop:taylor}, each entry of
$\ab(\yb)$ is bounded in absolute value by $\frac{3}{2}L_{\Hess f}\Delta^3$,
each entry of $\bb(\yb)$ is bounded in absolute value by $4L_{\Hess f}\Delta^2$,
and each entry of $\cb(\yb)$ is bounded in absolute value by $2L_{\Hess f}\Delta$.
Let $\Qb$ denote the system equations \eqref{eq:model_error_linsys}.
Consider the rescaled matrix
\[
\hat{\Qb} \defined
\begin{bmatrix}
    \Ib_{n_{0} - 1} & & \\
    & \Delta \Ib_{n_{1}} & \\
    & & \Delta^2 \Ib_{n_{2}}
\end{bmatrix}
\Qb
\begin{bmatrix}
    \Delta^{-1} \Ib_{n} & \\
    & \Delta^{-2} \Ib_{n(n+1)/2}
\end{bmatrix}.
\]
Note that \eqref{eq:model_error_linsys} is equivalent to
\begin{equation*}
    \hat{\Qb}
    \begin{bmatrix}
        \Delta \tb(\yb) \\
        \Delta^2 \textvec{\tilde{\Eb}^H(\yb)}
    \end{bmatrix}
    =
    \begin{bmatrix}
        \ab(\yb) \\
        \Delta \bb(\yb) \\
        \Delta^2 \cb(\yb)
    \end{bmatrix}
    \; \text{so} \;
    \norm{
    \begin{bmatrix}
        \Delta \tb(\yb) \\
        \Delta^2 \textvec{\tilde{\Eb}^H(\yb)}
    \end{bmatrix}
    }
    \le
    \norm{\hat{\Qb}^{-1}}
    \norm{
    \begin{bmatrix}
        \ab(\yb) \\
        \Delta \bb(\yb) \\
        \Delta^2 \cb(\yb)
    \end{bmatrix}
    }.
\end{equation*}
From our bounds on the individual norms of $\ab(\yb)$, $\bb(\yb)$, and
$\cb(\yb)$, we have that
\begin{align*}
    \norm{
    \begin{bmatrix}
        \ab(\yb) \\
        \Delta \bb(\yb) \\
        \Delta^2 \cb(\yb)
    \end{bmatrix}
    }^2
    &= \norm{\ab(\yb)}^2 + \Delta^2 \norm{\bb(\yb)}^2 + \Delta^4 \norm{\cb(\yb)}^2 \\
    &\le \frac{9}{4} n_{0} L_{\Hess f}^2 \Delta^6
        + 4 n_{1} L_{\Hess f}^2 \Delta^6
        + 4 n_{2} L_{\Hess f}^2 \Delta^6
        \\
    &= \left( \frac{9}{4} n_{0} + 4 n_{1} + 4 n_{2} \right) L_{\Hess f}^2 \Delta^6.
\end{align*}
From here, we first derive our bound on $\tilde{\Eb}^H(\yb)$.
We have that
\begin{align*}
    \norm{\Delta^2 \textvec{\tilde{\Eb}^H(\yb)}}
    &\le \norm{\begin{bmatrix} \Delta \tb(\yb) \\ \Delta^2 \textvec{\tilde{\Eb}^H(\yb)} \end{bmatrix}} \\
    &\le \left( \frac{9}{4} n_{0} + 4 n_{1} + 4 n_{2} \right)^{1/2} L_{\Hess f} \norm{\hat{\Qb}^{-1}} \Delta^3 \\
    \implies \norm{\textvec{\tilde{\Eb}^H(\yb)}} &\le \left( \frac{9}{4} n_{0} + 4 n_{1} + 4 n_{2} \right)^{1/2} L_{\Hess f} \norm{\hat{\Qb}^{-1}} \Delta,
\end{align*}
from which we have
\begin{align*}
\norm{\tilde{\Eb}^H(\yb)}_2
\le \norm{\tilde{\Eb}^H(\yb)}_F &\le \sqrt{2} \norm{\textvec{\tilde{\Eb}^H(\yb)}}_2 \\
&\le \sqrt{2} \left( \frac{9}{4} n_{0} + 4 n_{1} + 4 n_{2} \right)^{1/2} L_{\Hess f} \norm{\hat{\Qb}^{-1}} \Delta.
\end{align*}
This establishes the desired bound on $\|\tilde{\Eb}^H(\yb)\|$.
As for a bound on $\|\vec{t}(\yb)\|$,
\begin{align*}
    \norm{\Delta \tb(\yb)}
    \le \norm{\begin{bmatrix} \Delta \tb(\yb) \\ \Delta^2 \textvec{\tilde{\Eb}^H(\yb)} \end{bmatrix}}
    &\le \left( \frac{9}{4} n_{0} + 4 n_{1} + 4 n_{2} \right)^{1/2} L_{\Hess f} \norm{\hat{\Qb}^{-1}} \Delta^3.
\end{align*}
From the definition $\tb(\yb) = \tilde{\eb}^g(\yb) - \tilde{\Eb}^H(\yb) (\yb - \yb^0)$, we obtain
\begin{align*}
    \norm{\tilde{\eb}^g(\yb)} &\le \norm{\tb(\yb)} + \norm{\tilde{\Eb}^H(\yb)} \norm{\yb - \yb^0} \\
    &\le (1 + \sqrt{2}) \left( \frac{9}{4} n_{0} + 4 n_{1} + 4 n_{2} \right)^{1/2} L_{\Hess f} \norm{\hat{\Qb}^{-1}} \Delta^2.
\end{align*}
Finally, for $i$ with $|\alphab^i| = 0$, we can combine \eqref{eq:zero_property} with
the expression $\gb = \grad f(\yb) + \tilde{\eb}^g(\yb) - \Hb (\yb - \yb^0)$ to conclude
\begin{align*}
    e^f(\yb)
    =\;& f(\yb^i) - f(\yb) - (\yb^i - \yb)^T (\grad f(\yb) + \tilde{\eb}^g(\yb) - \Hb (\yb - \yb^0)) \\
        &- \frac{1}{2} (\yb^i - \yb)^T \Hb (\yb^i - \yb) - (\yb^i - \yb)^T \Hb (\yb - \yb^0)
    \\
    =\;& f(\yb^i) - f(\yb) - (\yb^i - \yb)^T \grad f(\yb) - \frac{1}{2} (\yb^i - \yb)^T \Hess f(\yb) (\yb^i - \yb) \\
        &- (\yb^i - \yb)^T \tilde{\eb}^g(\yb) - \frac{1}{2} (\yb^i - \yb)^T \tilde{\Eb}^H(\yb) (\yb^i - \yb)
    .
\end{align*}
Again using the bound on the absolute value of entries of $\ab(\yb)$,
\begin{align*}
    |e^f(\yb)|
    &\le \frac{4}{3} L_{\Hess f} \Delta^3 + (2 \Delta) \norm{\tilde{\eb}^g(\yb)} + \frac{1}{2} (2 \Delta)^2 \norm{\tilde{\Eb}^H(\yb)}
    \\
    &\le \left( \frac{4}{3} + \left( 2(1+\sqrt{2}) + 2\sqrt{2} \right) \left( \frac{9}{4} n_{0} + 4 n_{1} + 4 n_{2} \right)^{1/2} \norm{\hat{\Qb}^{-1}} \right) L_{\Hess f} \Delta^3 \\
    &\le \left( \frac{4}{3} + 2 \left( 1 + 2 \sqrt{2} \right) \left( \frac{9}{4} n_{0} + 4 n_{1} + 4 n_{2} \right)^{1/2} \norm{\hat{\Qb}^{-1}} \right) L_{\Hess f} \Delta^3
    .
\end{align*}

Now, the matrix $\hat{\Qb}$
is related to the matrix $\hat{\Mb}$ from \eqref{eq:Mhat} via
\[
\hat{\Mb} =
\begin{bmatrix}
    1 & \zerob^T \\
    \vec{z} & \hat{\Qb}
\end{bmatrix},
\]
where $\vec{z}=[1, \dots, 1, 0, \dots, 0]^T$ with $(n_{0} - 1)$ many leading ones.

Since $D$ is poised, $\hat{\Mb}$ is invertible. We also have that $\hat{\Qb}$ is invertible;  if $\hat{\Qb}$ were singular, then $\hat{\Mb}$ would be singular as well because of the zero row above $\Qb$.
From here, block matrix inversion yields
\[
\hat{\Mb}^{-1} =
\begin{bmatrix}
    1 & \zerob^T \\
    -\hat{\Qb}^{-1} \vec{z} & \hat{\Qb}^{-1}
\end{bmatrix}.
\]
Since $\hat{\Qb}^{-1}$ is a submatrix of $\hat{\Mb}^{-1}$, we deduce that $\normil{\hat{\Qb}^{-1}} \le \normil{\hat{\Mb}^{-1}}$.

We conclude that the model is fully quadratic with constants
\begin{align*}
    \kappa_{eh} &= \sqrt{2} \left( \frac{9}{4} n_{0} + 4 n_{1} + 4 n_{2} \right)^{1/2} L_{\Hess f} \norm{\hat{\Qb}^{-1}},
    \\
    \kappa_{eg} &= (1 + \sqrt{2}) \left( \frac{9}{4} n_{0} + 4 n_{1} + 4 n_{2} \right)^{1/2} L_{\Hess f} \norm{\hat{\Qb}^{-1}},
    \text{ and}
    \\
    \kappa_{ef} &= \left( \frac{4}{3} + 2 \left(1 + 2 \sqrt{2} \right) \left( \frac{9}{4} n_{0} + 4 n_{1} + 4 n_{2} \right)^{1/2} \norm{\hat{\Qb}^{-1}} \right) L_{\Hess f}.
\end{align*}
Replacing $\normil{\hat{\Qb}^{-1}}$ with $\normil{\hat{\Mb}^{-1}}$ gives the desired constants.
\end{proof}

This result is a specific form of the error bounds developed by Ciarlet and Raviart~\cite{Ciarlet_1972} but with exact constants and a different approach to the proof.
From our bounds in \Cref{sec:birkhoff_interp}, we also have the following associated corollary.
\begin{corollary}
    Let \Cref{assn:poised_D2lipshitz} hold. Then the Birkhoff
    interpolation model $m$ given by \eqref{eq:m_expression} is fully quadratic
    on $\Ball(\yb^0, \Delta)$ with the constants
    \begin{align*}
        \kappa_{ef} &= \left( \frac{4}{3} + 2 \left(1 + 2 \sqrt{2} \right) \left( \frac{9}{4} n_{0} + 4 n_{1} + 4 n_{2} \right)^{1/2} C \Lambda \right) L_{\Hess f},
        \\
        \kappa_{eg} &= (1 + \sqrt{2}) \left( \frac{9}{4} n_{0} + 4 n_{1} + 4 n_{2} \right)^{1/2} L_{\Hess f} C \Lambda,
        \text{ and}
        \\
        \kappa_{eh} &= \sqrt{2} \left( \frac{9}{4} n_{0} + 4 n_{1} + 4 n_{2} \right)^{1/2} L_{\Hess f} C \Lambda
    \end{align*}
    for a constant $C = C(n, \phi)$, which depends only on the input dimension and the choice of the natural basis.
\end{corollary}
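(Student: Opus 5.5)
The corollary follows by combining \Cref{thm:Birkhoff_fully_quadratic} with the converse direction of \Cref{thm:Lambda_iff_invnorm}. The implicit hypotheses are those of \Cref{thm:Birkhoff_fully_quadratic}: \Cref{assn:A_D} holds, $D$ is poised, $\yb^0 \in \cL(\xb^{(0)})$, and $\Delta(Y) \le \Delta \le \Delta_{max}$. In addition, $D$ is assumed $\Lambda$-poised with respect to $A$ in $\Ball(\yb^0, \Delta(Y))$; this is the assumption under which $\Lambda$ enters the constants.

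First, I invoke \Cref{thm:Birkhoff_fully_quadratic}. It shows that $m$ is fully quadratic on $\Ball(\yb^0,\Delta)$ with constants $\kappa_{eh},\kappa_{eg},\kappa_{ef}$ that depend on the data only through the factor $\normil{\hat{\Mb}^{-1}}$. Each of these constants is increasing in that factor, since all other quantities appearing in them are nonnegative.

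Second, I apply the reverse implication of \Cref{thm:Lambda_iff_invnorm}. Since $\zerob\in A$ and $\abs{\alphab}\le 2$ by \Cref{assn:A_D}, $\Lambda$-poisedness in $\Ball(\yb^0,\Delta(Y))$ gives $\normil{\hat{\Mb}^{-1}} \le C\Lambda$ with $C=(q+1)/\sigma_\infty$, where $\sigma_\infty$ comes from \Cref{lem:CSV3.10}. This $C$ depends only on $n$ and the natural basis $\phi$.

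Third, I substitute this bound into the three constants. Monotonicity preserves the inequalities in \Cref{def:models}, so the model errors remain bounded by the larger constants, and these are exactly the displayed expressions.

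The only point needing care is matching the region in which $\Lambda$-poisedness is assumed. \Cref{thm:Lambda_iff_invnorm} uses the ball of radius $\Delta(Y)$, not $\Delta$. If poisedness is instead given on $\Ball(\yb^0,\Delta)$ with $\Delta \ge \Delta(Y)$, the hypothesis restricts to the smaller ball and the same argument applies. Apart from this, the proof is a direct substitution.
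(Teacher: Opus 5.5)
Your proposal is correct and takes the same route as the paper: use the converse direction of \Cref{thm:Lambda_iff_invnorm} to get $\normil{\hat{\Mb}^{-1}} \le C\Lambda$, then substitute into the constants of \Cref{thm:Birkhoff_fully_quadratic}. Your explicit statement of the tacit hypotheses (\Cref{assn:A_D}, poisedness, and $\Lambda$-poisedness on $\Ball(\yb^0,\Delta(Y))$ or a larger ball), together with the monotonicity remark, makes precise what the paper's one-line proof leaves implicit.
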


\begin{proof}
\Cref{thm:Lambda_iff_invnorm} establishes that $\normil{\hat{\Mb}^{-1}} \le C \Lambda$, and the result follows immediately from \Cref{thm:Birkhoff_fully_quadratic}.
Hence, the result of \Cref{thm:Birkhoff_fully_quadratic} holds when replacing $\normil{\hat{\Mb}^{-1}}$ with $C \Lambda$, which was to be proved.
\end{proof}

The model-improvement procedures
maintain interpolation sets $D$ that guarantee
the models $m$ remain fully quadratic models throughout the optimization process.
 With fully quadratic models available on every iteration, it is established that the model-based trust-region framework, of which \Cref{alg:optimizer} is a particular instance, yields a sequence of model centers converging to a second-order stationary point~\cite{CSV2009}.

\section{Experimental Results}
\label{sec:experiments}

We now investigate the performance of the proposed Birkhoff
interpolation framework in settings with limited derivative availability.

We test a Python implementation of \Cref{alg:optimizer} on a subset of problems from the CUTEst problem collection.
While CUTEst provides (full) first- and second-derivative information for all tested problems (which we do use for benchmarking), we simulate availability of only some derivatives via the following approach.
Given an unconstrained objective function $f$ with domain $\Reals^n$, we generate a subset of $\{1,\dots,n\}$ for which the corresponding first partial derivatives of $f$ can be evaluated,
    $K \defined \{ k \mid \partial_{\xb_k}f\}$.
The remaining partial derivatives are unavailable, that is, $U \defined \{1,\dots,n\}\setminus K$.
As for second derivative availability, we assume that both $k, k' \in K$ if and only if $\partial_{\xb_k} \partial_{\xb_{k'}} f$ is available.
That is, in our notation, the available derivative set is
\begin{equation} \label{eq:def_A}
A = \left\{  \alphab \mid \alphab_j = 0 \text{ for all } j \in U, |\alphab| \le 2  \right\}.
\end{equation}
In our experiments we consider different levels of derivative availability, parameterized by a fraction of $K$.
Precisely,
for each objective function $f$ in the test set (with $n$-dimensional domain), we generate uniformly at random without replacement a subset $K$ from $\{1,\dots,n\}$; the size of $K$ is varied such that $\lceil n_K/n\rceil \in \{0.25, 0.5, 0.75\}$,
 where we denote $n_K \triangleq |K|$.
 After the random draw is made, the induced some-available-derivatives problem is then held fixed in this test set; randomness was  introduced only to avoid systematic bias in the generation of test problems.

We selected 89 CUTEst problems to assess the convergence properties and numerical performance of \Cref{alg:optimizer}.
The criteria for selecting these functions are that they are unconstrained, have $n \le 15$, and are twice continuously differentiable  and that all considered solvers completed successfully.
We note that there is no guarantee that $\Hess f$ is Lipschitz continuous for all $f$, and so \Cref{assn:poised_D2lipshitz} may not hold for such $f$.
For each problem, we run our solver with the different available derivative sets $A$ described above.
We consider a problem solved when the solver identifies a point $\xb^{\star}$ satisfying $\normil{\grad f(\xb^{\star})} \le \tau$;
for benchmarking purposes, we use the true gradient $\grad f$.

Our data profiles measure computational effort in terms of normalized oracle accesses, as opposed to iteration counts or function evaluations alone.
The horizontal axes report the number of unique point/multi-index queries $(\xb,\alphab)$ divided by $n+1$.
Thus, evaluating a function value and evaluating an available derivative component at the same point are counted as distinct units of information; repeated requests for an already computed quantity are not recounted.
Consequently, methods with larger $K$ incur higher oracle costs per sampled point.
Each evaluation unit is a single evaluation associated with a datum $(\yb,\alphab)$.
In the experiments presented, we focus on the intermediate derivative-availability regimes $n_K/n \in \{0.25,0.50,0.75\}$, since the fully derivative-free and fully derivative-based settings are less informative for comparing the relative advantages of partial-derivative interpolation models.

We compare our limited derivative algorithm with another such algorithm described by Fuhrl\"{a}nder and Sch\"{o}ps~\cite{Fuhrlander2023}.
There, given a set of points $Y$, the authors begin with interpolation conditions $D_0 = \{ (\yb^i, \zerob) : i = 0, \dots, p \}$ for each $\yb^i\in Y$.
They then augment the set $D_0$ to $D_1 = \{ (\yb, \alphab) : (\yb, \zerob) \in D_0, \alphab \in A, \abs{\alphab} \ge 1 \}$ to obtain the Hermite interpolation conditions $D = D_0 \cup D_1$.
That is, given any $\alphab \in A$, the value $\partial^{\alphab} f(\xb)$ is queried if and only if $\partial^{\alphab'} f(\xb)$ is also queried for every $\alphab' \in A$.
The number of points $p$ ($|Y|=p$) is chosen so that $\abs{D} \ge q+1$; the system \eqref{eq:m_interp_linsys} is then solved in the least-squares regression sense.

The method of Fuhrl\"{a}nder and Sch\"{o}ps, being based on \texttt{BOBYQA}, is also a trust-region method.
In our experiments, we employed identical parameters common to trust-region methods.
We set stopping criteria in both methods to small values: methods were not stopped unless the model gradient norm was machine epsilon.

We display the results of this numerical study in \Cref{fig:CUTEst}.
We refer to our method simply as ``Birkhoff" interpolation, and we refer to the method of Fuhrl\"{a}nder and Sch\"{o}ps simply as ``Hermite" interpolation, in order to highlight the key difference in interpolation approaches.
Overall, we observe that increasing the proportion of known derivatives generally improves optimization performance for both interpolation approaches.
In particular, the Birkhoff interpolation method exhibits a relatively consistent improvement as the ratio $n_K/n$ increases from $0.25$ to $0.75$.
This suggests that the incorporation of derivative information can substantially improve model quality without requiring complete gradient access.

\begin{figure}
    \centering

    \begin{minipage}{0.49\linewidth}
        \centering
        \includegraphics[width=\linewidth]{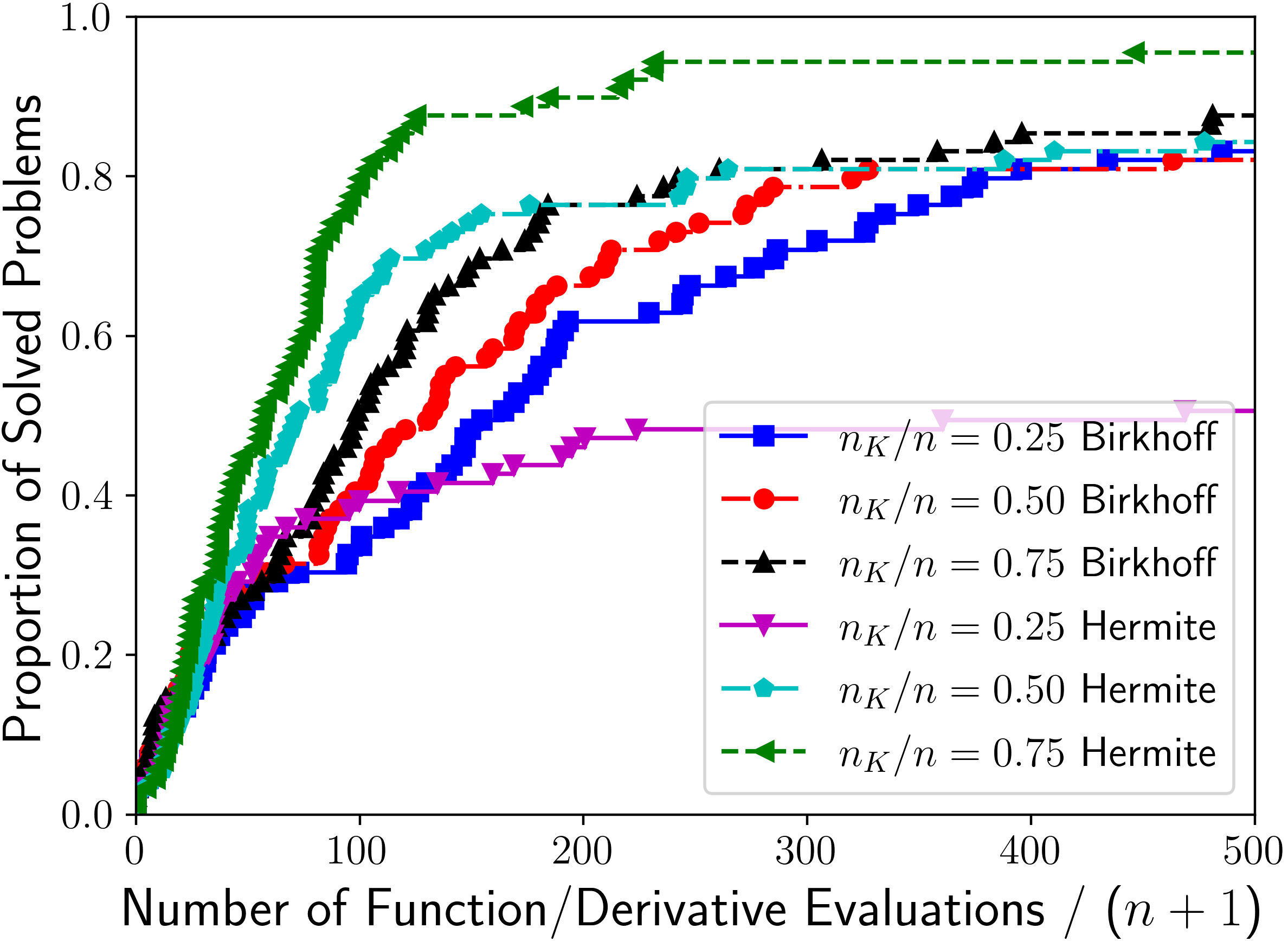}

        $\tau = 10^{-2}$
    \end{minipage}
    \hfill
    \begin{minipage}{0.49\linewidth}
        \centering
        \includegraphics[width=\linewidth]{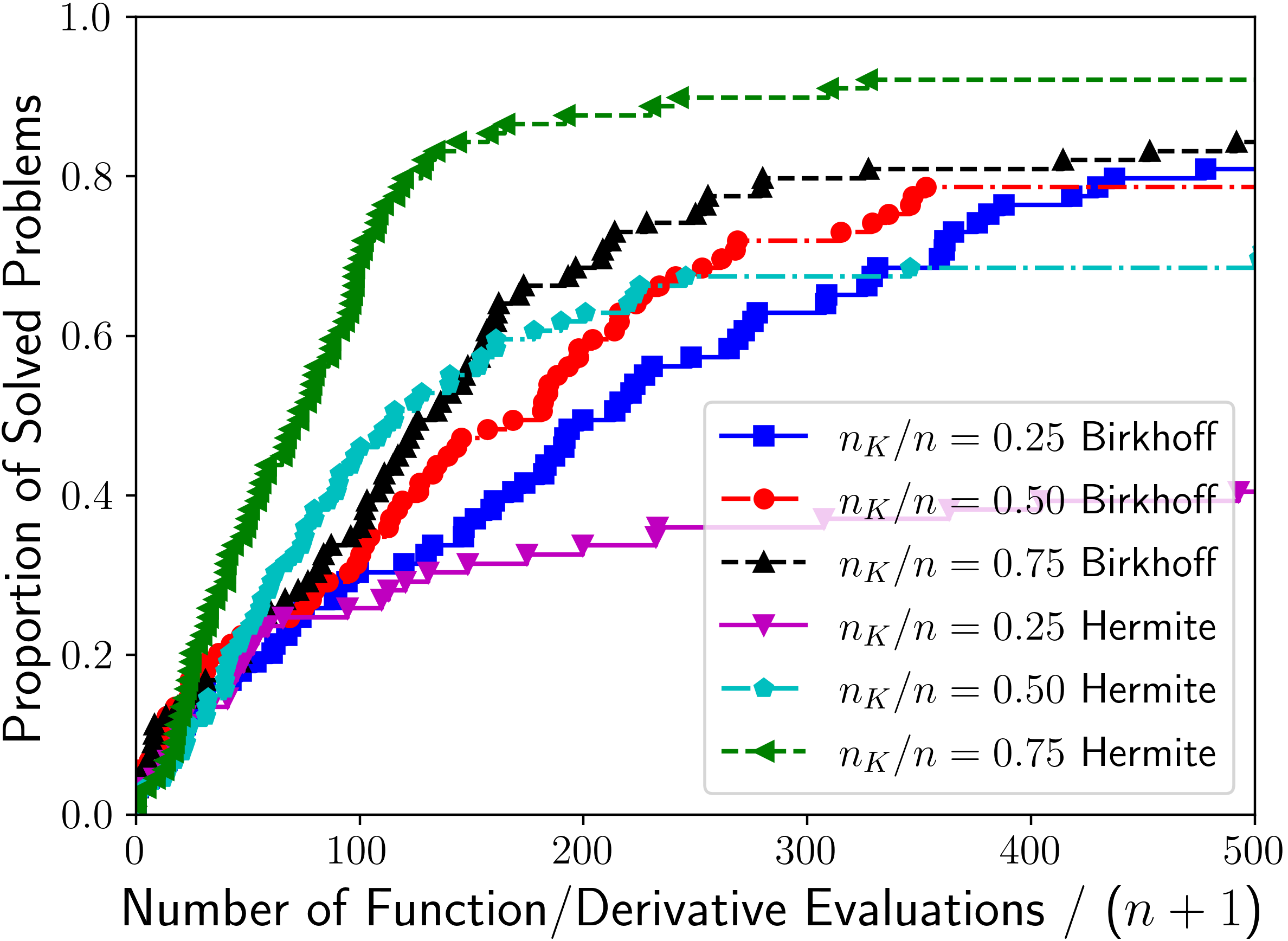}

        $\tau = 10^{-4}$
    \end{minipage}
    \caption{
    Performance profiles on 89 CUTEst problems comparing our Birkhoff approach and Hermite~\cite{Cecere_2025} interpolation approaches under varying levels of derivative availability. The quantity $n_K/n$ denotes the fraction of variables assigned to the known derivative set $K$.}
    \label{fig:CUTEst}
\end{figure}

The Hermite approach~\cite{Fuhrlander2023} demonstrates stronger sensitivity to the amount of derivative information available.
In particular, the Hermite method performs exceptionally well when $75\%$ of the derivatives are known, solving a large proportion of the test problems within relatively small oracle budgets.
Its performance degrades substantially, however, when fewer derivatives are available, especially in the $25\%$ case.
The Birkhoff approach exhibits less variability across the different derivative-availability regimes.

One possible reason for this difference is the manner in which interpolation conditions are incorporated into the model-building process.
The Hermite approach includes all derivative conditions associated with a point simultaneously, whereas the Birkhoff framework permits more selective derivative usage patterns.
Moreover, our implementation of the Birkhoff interpolation method consistently constructs fully quadratic interpolation models; this requires
$q_n = (n+1)(n+2)/2$
data in $D$.
Consequently, future developments of our Birkhoff method may benefit from greater flexibility, in particular by employing underdetermined quadratic models; see, for example,~\cite[Chapter 5]{CSV2009}.
For similar reasons,
we did not include purely derivative-free or fully derivative-based optimization strategies. The Hermite implementation reduces to \texttt{BOBYQA} in the absence of derivative information, whereas our implementation always constructs fully quadratic models; this is an extreme disadvantage.

We believe that these results indicate that partial derivative information can significantly improve optimization efficiency but that the manner in which derivative information is incorporated into the interpolation conditions can strongly influence robustness and performance.

\section{Conclusions}
\label{sec:conclusions}

We have introduced a derivative-free optimization framework based on
Birkhoff interpolation models and established guarantees of model quality in the
unconstrained setting. Our approach expands the flexibility of
interpolation-based models by allowing derivative information to be
incorporated in a dynamic manner, while preserving the theoretical guarantees
needed for global convergence. Numerical experiments demonstrate that the
resulting models can be constructed effectively and perform competitively
across a range of test problems.

More broadly, our framework provides a bridge between classical derivative-free optimization and modern simulation environments in which derivative information is only partially available.
We expect such settings to become increasingly common in large-scale scientific computing applications involving coupled multiphysics models, embedded machine learning components, and legacy simulation infrastructure.

\bibliographystyle{siamplain}
\bibliography{references}
\end{document}